\documentclass[11pt,reqno]{amsart}

\setlength{\textwidth}{6.3in} \setlength{\textheight}{9.25in}
\setlength{\evensidemargin}{0in} \setlength{\oddsidemargin}{0in}
\setlength{\topmargin}{-.3in}

\usepackage{amsmath,amsthm,amsfonts,amssymb,latexsym,mathrsfs,color,pstricks}
\usepackage{hyperref}

\newtheorem{theorem}{Theorem}
\newtheorem{corollary}[theorem]{Corollary}

\newtheorem{problem}[theorem]{Open Problem}

\newtheorem{example}[theorem]{Example}

\newcommand{\Stirling}[2]{\genfrac{\{}{\}}{0pt}{}{#1}{#2}}

\newcommand{\Eulerian}[2]{\genfrac{<}{>}{0pt}{}{#1}{#2}}
\newcommand{\arxiv}[1]{\href{http://arxiv.org/abs/#1}{\texttt{arXiv:#1}}}
\newcommand{\luschny}{http://www.luschny.de/math/euler/StirlingFrobeniusNumbers.html}

\newcommand{\sgn}{{\rm sgn\,}}

\newcommand{\mn}{{\mathcal N}}
\linespread{1.25}

\title[On generalized Stirling grammars]{Normal ordering problem and the extensions of the Stirling grammar}

\author[S.-M.~Ma]{Shi-Mei~Ma}
\address{School of Mathematics and Statistics,
         Northeastern University at Qinhuangdao,
         Hebei 066004, P. R. China}
\email{shimeimapapers@gmail.com (S.-M. Ma)}
\author[T.~Mansour]{Toufik Mansour}
\address{Department of Mathematics, University of Haifa, 31905 Haifa, Israel}
\email{tmansour@univ.haifa.ac.il (T. Mansour)}
\author{Matthias Schork}
\address{Camillo-Sitte-Weg 25, 60488 Frankfurt, Germany}
\email{mschork@member.ams.org (M. Schork)}

\begin{document}

\maketitle

\begin{abstract}
The purpose of this paper is to investigate the connection between context-free grammars and normal ordering problem, and then to explore various extensions of the Stirling grammar.
We present grammatical characterizations of several well known combinatorial sequences, including
the generalized Stirling numbers of the second kind related to the normal ordering problem and
the $r$-Dowling polynomials. Also, possible avenues for future research are described.

{\bf Keywords:} Context-free grammars, Boson operators, Stirling grammars, Stirling numbers of the second kind, $r$-Dowling polynomials
\end{abstract}

\section{Introduction}\label{sec:Introduction}
Throughout this paper, we always assume that $n\geq 1$.
The {\em Weyl algebra} can be considered as an abstract algebra generated by two symbols $U$ and $V$
satisfying the commutation relation
\begin{align}\label{eq1}
UV-VU=1.
\end{align}
A well known representation of~\eqref{eq1} is by operators $V=X$ and $U=D$, where $X$ and $D$ are defined by $X(f)(x)=xf(x)$ and $D(f)(x)=\frac{df}{dx}(x)$, respectively. Clearly, $(DX-XD)f=f$ for any function $f$. By the commutation relation $DX=XD+1$, any word $\omega$ in $X$ and $D$ can be brought into {\em normal ordering form}
\begin{align}\label{eq2}
\mathcal{N}(\omega)=\sum_{i,j\geq0}c_{ij}(\omega)X^iD^j,
\end{align}
for some nonnegative integers $c_{ij}(\omega)$. For specific words $\omega$, the problem of normal ordering was already investigated by Scherk~\cite{S1823} in 1823. In particular, he showed for $\omega=(XD)^n$ that
\begin{equation}\label{Scherk-Stirling}
\mathcal{N}((XD)^n)=\sum_{k=0}^n\Stirling{n}{k}X^kD^k,
\end{equation}
where $\Stirling{n}{k}$ are the {\it Stirling numbers of the second kind}, which satisfy the recurrence relation
\begin{equation*}
\Stirling{n}{k}=k\Stirling{n-1}{k}+\Stirling{n-1}{k-1}
\end{equation*}
with the initial conditions $\Stirling{n}{1}=1$ and $\Stirling{n}{k}=0$ for $k>n$.
Since 1930 several other words have been investigated (for instance, $\omega=(X^rD^s)^n$ for $r,s\geq0$) and a combinatorial interpretation of the related normal ordering coefficients $c_{ij}(\omega)$ in~\eqref{eq2} has been given. For instance, in 2005, Varvak~\cite{Varvak05} established an interpretation as rook numbers of an associated {\em Ferrers board}. Other interpretations can be found in~\cite{AMS,Blasiak10,L2009,Mansour1101,Mansour1102,Mo2011,SB}. In particular, Lang~\cite{L2009} and Mohammad-Noori~\cite{Mo2011} considered generalized Stirling numbers and drew a connection to certain labeled trees. Copeland~\cite{cop} considered representations of powers of vector fields in terms of forests. In~\cite{AMS}, Asakly et al. showed how the normal ordered form of a word can be read off from a labeled tree.

In the physical context, the relation~\eqref{eq1} appears in quantum physics as the commutation relation for the single mode boson {\em annihilation operator} $b$ and {\em creation operator} $b^{\dag}$ satisfying $bb^{\dag}-b^{\dag}b=1$. The number operator $N=b^{\dag}b$ has a special importance in physics. In this context, normal ordering is a functional representation of operator functions in which all the creation operators stand to the left of the annihilation operators. The normal ordered form of functions in $b$ and $b^{\dag}$ allows to evaluate correlation functions in a simpler way and has been investigated since the beginning of quantum mechanics. Katriel~\cite{Katriel74} considered normal ordering $N^n=(b^{\dag}b)^n$ and showed that
\begin{equation}\label{boson-Stirling}
\mathcal{N}((b^{\dag}b)^n)=\sum_{k=0}^n\Stirling{n}{k}(b^{\dag})^kb^k.
\end{equation}
This relation (and also generalizations thereof) and the combinatorial nature of the corresponding coefficients has been studied in the
physical literature (see~\cite{Bla2004,Benoumhani97,Blasiak03,Blasiak10,Katriel74,Kat92,Mendez05,Mik85,Schork03,Schork06}).

Let $A$ be an alphabet whose letters are regarded as independent commutative indeterminates. A {\it context-free grammar} $G$ over $A$ is defined as a set of substitution rules that replace a letter from $A$ by a formal function over $A$. The {\it formal derivative} $D$ is a linear operator defined with respect to a context-free grammar $G$. For any formal functions $u$ and $v$, we have
$$D(u+v)=D(u)+D(v),\quad D(uv)=D(u)v+uD(v) \quad and\quad D(f(u))=\frac{\partial f(u)}{\partial u}D(u),$$
where $f$ is an analytic function. It follows from {\it Leibniz's formula} that
\begin{equation}\label{Dnab-Leib}
D^n(uv)=\sum_{k=0}^n\binom{n}{k}D^k(u)D^{n-k}(v).
\end{equation}
The grammatical method was systematically introduced by Chen~\cite{Chen93}
in the study of exponential structures in combinatorics. In particular,
Chen~\cite[Eq.~(4.8)]{Chen93} studied the {\it Stirling grammar}
\begin{equation}\label{Chen-grammar}
G=\{x\rightarrow xy, y\rightarrow y\},
\end{equation}
and he found that
\begin{equation}\label{Chen-grams}
D^n(x)=x\sum_{k=1}^n\Stirling{n}{k}y^k.
\end{equation}
Subsequently, Dumont~\cite{Dumont96} considered chains of general substitution rules on words. In particular,
Dumont~\cite[Section 2.1]{Dumont96} introduced the {\it Eulerian grammar}
$$G=\{x\rightarrow xy, y\rightarrow xy\},$$
and he proved that
\begin{equation*}
D^n(x)=x\sum_{k=0}^{n-1}\Eulerian{n}{k}x^{k}y^{n-k},
\end{equation*}
where $\Eulerian{n}{k}$ is the {\it Eulerian number}. The number $\Eulerian{n}{k}$ is closely related to $\Stirling{n}{k}$ (see~\cite[Eq.~(6.39)]{Graham89}):
\begin{equation}\label{StirlingEulerian}
k!\Stirling{n}{k}=\sum_{j}\Eulerian{n}{j}\binom{j}{n-k}.
\end{equation}
Recently, various extensions of the Eulerian grammar have been studied by several authors (see~\cite{Chen121,Chen122,Ma1301,Ma1302,Ma1303}). For example, Chen and Fu~\cite{Chen122} showed that the grammar
$$G=\{x\rightarrow x^2y, y\rightarrow x^2y\}$$
can be used to generate the {\it second-order Eulerian numbers} (see~\cite[A008517]{Sloane}).

Motivated by the similarity of~\eqref{Scherk-Stirling} and~\eqref{Chen-grams}, it is natural to
investigate the connection between context-free grammars and normal ordering problem, and then to explore various extensions of the Stirling grammar~\eqref{Chen-grammar}.
In this paper we present grammatical characterizations of several well known combinatorial sequences, including
the generalized Stirling numbers of the second kind related to the normal ordering problem and
the $r$-Dowling polynomials. Also, possible avenues for future research are described.

\section{Normal ordering problem and context-free grammars}
We can obtain the normal ordering $\mathcal{N}(\omega)$ of a word $\omega$ by means of {\em contractions} and {\em double dot operations}. The \emph{double dot operation} deletes all the letters $\varnothing$ and $\varnothing^{\dagger}$ in the word and then arranges it such that all the letters $b^{\dagger}$ precede the letters $b$. For example, $:b^{k}(b^{\dagger})^{l}:$ $=(b^{\dagger})^{l}b^{k}$. A \emph{contraction} consists of substituting $b=\varnothing$ and $b^{\dagger}=\varnothing^{\dagger}$ in the word whenever $b$ precedes $b^{\dagger}$. Among all possible contractions, we also include the null contraction, that is, the contraction leaving the word as
it is. The contents of {\it Wick's theorem} is the statement that
\begin{equation}\label{s0eq1}
\mathcal{N}(\omega)=\sum:\{\text{all possible contractions of $\omega$}\}:.
\end{equation}
An example for this is given by \eqref{boson-Stirling}, where the number of contractions of $(b^{\dagger}b)^n$ having exactly $n-k$ pairs of $b$ and $b^{\dagger}$ contracted is given by $\Stirling{n}{k}$.

Contractions can be depicted with diagrams called {\it linear representations}. Let us consider a word $\pi$ on the alphabet $\{b,b^{\dag}\}$ of length $n$, that is, $\pi=\pi_1\pi_{2}\cdots\pi_{n-1}\pi_n$ with $\pi_{i}\in\{b,b^{\dag}\}$. We draw $n$ vertices, say $1,2,\ldots,n$, on a horizontal line, such that the point $i$ corresponds to the letter $\pi_{i}$; we represent each $b$ by a white vertex and each letter $b^{\dagger}$ by a black vertex. A black vertex $j$ can be connected by an undirected edge $(i,j)$ to a white vertex $i$ when $i<j$ (but there may also be black vertices having no edge), where the edges are drawn in the plane above the points. This is the \emph{linear representation} of a contraction. An example is given in Figure~\ref{falt} for the word $(b^{\dagger}b)^2=b^{\dagger}bb^{\dagger}b$. A vertex having no edge is also called to be of {\it degree zero}.
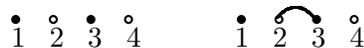
\begin{figure}[h]
\begin{center}
\begin{pspicture}(0,0)(5,.4)
\setlength{\unitlength}{3mm} \linethickness{0.3pt}
\multips(0,0)(3,0){2}{\pscircle*(0,0){.2}\pscircle(.5,0){.2}\pscircle*(1,0){.2}\pscircle(1.5,0){.2}}
\linethickness{0.8pt}
\qbezier(11.6,.1)(12.4,1)(13.2,.1)
\put(-.2,-1.2){$1$}\put(1.5,-1.2){$2$}\put(3.2,-1.2){$3$}\put(4.9,-1.2){$4$}
\put(9.7,-1.2){$1$}\put(11.4,-1.2){$2$}\put(13.1,-1.2){$3$}\put(14.8,-1.2){$4$}
\end{pspicture}
\end{center}
\caption{The linear representation of the contractions of the word $(b^{\dagger}b)^2$.}\label{falt}
\end{figure}

Let $G$ be the Stirling grammar~\eqref{Chen-grammar}. Let $\omega,\omega'$ be two words on the alphabet $\{x,y\}$. We denote the number of letters in $\omega$ by $|\omega|$, and we shall write $\omega=\omega^{(1)}\omega^{(2)}\cdots\omega^{(|\omega|)}$. We say that $(G^d,\omega)$ {\em generates $\omega'$ via the sequence $1=s_1,s_2,\ldots,s_{d+1}$} if there exists a sequence of words $a_n$ such that $a_1=\omega$, $a_{d+1}=\omega'$, and for all $j=2,3,\ldots,d+1$,
$$a_j=a_{j-1}^{(1)}\cdots a_{j-1}^{(s_j-1)}D(a_{j-1}^{(s_j)})a_{j-1}^{(s_j+1)}\cdots a_{j-1}^{(|a_{j-1}|)}.$$
Given the word $\omega$, the sequence $1=s_1,s_2,\ldots,s_{d+1}$ uniquely determines the word $\omega'$. However, given two words $\omega$ and $\omega'$, there may exist no (or more than one) such sequence $1=s_1,s_2,\ldots,s_{d+1}$ to obtain $\omega'$ from $\omega$.

\begin{example}\label{ExStir}
Let $G$ be the Stirling grammar~\eqref{Chen-grammar}.
For instance, $(G^2,xy)$ generates $xy^3$ via the sequence $1,1,1$; namely $a_1=xy$, $a_2=D(x)y=xyy$ and $a_3=D(x)yy=xyyy=xy^3$. Also, $(G^2,xy)$ generates $xy^2$ via the sequence $1,2,1$; namely $a_1=xy$, $a_2=xD(y)=xy$ and $a_3=D(x)y=xyy=xy^2$ (also via the sequences $1,1,2$ or $1,1,3$). As another example, $(G^2,xy)$ generates $xy$ via the sequence $1,2,2$; namely $a_1=xy$, $a_2=xD(y)=xy$ and $a_3=xD(y)=xy$.
\end{example}

\begin{theorem}
Let $G$ be the Stirling grammar~\eqref{Chen-grammar}.
There exists a bijection between the set of contractions of $(b^{\dagger}b)^{n+1}$ and the multiset of words that are generated by $(G^n,xy)$.
\end{theorem}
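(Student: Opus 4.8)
The plan is to realise the desired map as a composite that passes through set partitions of $[n+1]$, which index both sides. First, label the $n+1$ consecutive factors $b^{\dagger}b$ of $(b^{\dagger}b)^{n+1}$ by $1,2,\dots,n+1$ from left to right, so that the $b^{\dagger}$ of factor $i$ is the black vertex at position $2i-1$ and the $b$ of factor $i$ is the white vertex at position $2i$. In the linear representation of a contraction every edge joins the white vertex $2i$ of some factor $i$ to the black vertex $2j-1$ of some factor $j$, the admissibility condition $2i<2j-1$ is equivalent to $i<j$, and each white and each black vertex lies on at most one edge. Recording such an edge as an arc $i\to j$ turns a contraction of $(b^{\dagger}b)^{n+1}$ into a family of vertex-disjoint, strictly increasing directed paths on $\{1,\dots,n+1\}$, i.e.\ into the arc diagram of a set partition of $[n+1]$, the block $\{a_1<\cdots<a_t\}$ being encoded by the arcs $a_1\to a_2\to\cdots\to a_t$; this is the classical bijection, and under it a contraction with $p$ contracted pairs corresponds to a partition of $[n+1]$ into exactly $n+1-p$ blocks.

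Second, I would match set partitions of $[n+1]$ with the sequences $1=s_1,s_2,\dots,s_{n+1}$ along which $(G^n,xy)$ generates a word. Regard the $x$ in $a_1=xy$ as inert, the letters $y$ present at any stage as the blocks created so far --- the initial $y$ being the block of the element $1$ --- and step $j$ (for $j=2,\dots,n+1$) as the insertion of the element $j$. If $a_{j-1}$ has $c$ letters $y$, so $|a_{j-1}|=c+1$, then $s_j=1$ applies $D$ to $x$ and, via $x\to xy$, creates a new $y$: the element $j$ opens a fresh block $\{j\}$; while $s_j=i+1$ with $1\le i\le c$ applies $D$ to the $i$-th letter $y$ counted from the left and, via $y\to y$, leaves the word alone: the element $j$ joins the $i$-th block. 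So step $j$ offers $1+c$ choices, one new block and $c$ old ones, which is exactly the recursion defining set partitions; hence a sequence determines and is determined by the partition of $[n+1]$ it builds (the blocks listed in left-to-right order of their $y$'s), and a sequence produces the word $xy^k$ precisely when that partition has $k$ blocks.

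Composing the two bijections sends a contraction of $(b^{\dagger}b)^{n+1}$ with $p$ contracted pairs to a sequence $1=s_1,\dots,s_{n+1}$ generating $xy^{\,n+1-p}$, and this correspondence is a bijection onto the set of all such sequences. Since the multiplicity of a word in the multiset generated by $(G^n,xy)$ is by definition the number of sequences generating it, this is precisely the claimed bijection with the multiset. (Consistently, a contraction of $(b^{\dagger}b)^{n+1}$ with $n+1-k$ contracted pairs and a set partition of $[n+1]$ into $k$ blocks are both counted by $\Stirling{n+1}{k}$, cf.\ \eqref{boson-Stirling}, while the multiplicity $m_n(k)$ of $xy^k$ obeys $m_n(k)=k\,m_{n-1}(k)+m_{n-1}(k-1)$ --- apply $D$ to one of the $k$ letters $y$ or to the $x$ of a word $xy^k$ --- with $m_1(k)=\Stirling{2}{k}$, again the Stirling recursion.) The step demanding the most care, and where the argument could fail, is the bookkeeping in the second bijection: the letters $y$ of a word $xy^k$ are literally indistinguishable, so ``the $i$-th letter $y$'' makes sense only after fixing an identification of the $y$'s with the blocks, and one must verify that the left-to-right order of the $y$'s --- which, because $x\to xy$ inserts the new $y$ immediately after the $x$, is the reverse of the order of creation of the blocks --- is used consistently throughout the passage among sequences, set partitions, and arc diagrams. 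Pinning down this convention is exactly what turns an equality of cardinalities into a genuine bijection.
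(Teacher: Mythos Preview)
Your argument is correct and yields a genuine bijection, but it is not the same bijection as the one the paper constructs. The paper builds a direct map: given the sequence $s_1,\ldots,s_{n+1}$, it processes $j=2,\ldots,n+1$ and, when $s_j\neq 1$, connects the black vertex $2j-1$ to the $(s_j-1)$-st \emph{unused} white vertex counted leftwards from $2j-1$; when $s_j=1$ the vertex $2j-1$ stays isolated. The inverse reads off the sequence from the edge labels. No intermediate set partition is invoked.

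Your route factors through partitions of $[n+1]$: contractions become arc diagrams of partitions in the classical way, and sequences become partitions by interpreting $s_j=1$ as ``open a new block'' and $s_j=i+1$ as ``place $j$ in the $i$-th block'', the blocks being ordered by the left-to-right positions of the $y$'s (equivalently, by decreasing block minimum). This is a perfectly good bijection, and your cautionary remark about fixing the identification of the $y$'s with blocks is exactly the point that makes it one. However, the resulting composite does not coincide with the paper's map. For instance, with $n=3$ the sequence $1,1,3,2$ is sent by the paper to the contraction with edges $(2,5),(6,7)$, whereas your composite sends it to the contraction with edges $(2,5),(4,7)$; the sequence $1,1,3,3$ is sent to the other one in each case. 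The discrepancy comes from the fact that the paper orders the available ``slots'' by the current block \emph{maxima} (the unused white vertices nearest to $2j-1$), while your convention orders them by block \emph{minima}; these orders disagree once blocks have grown past their minima.

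What each approach buys: yours is more conceptual---it makes transparent why both sides are counted by $\Stirling{n+1}{k}$ and why the corollary on $P_n$ follows---at the price of a composite map and the bookkeeping you flag. The paper's construction is a single explicit rule in each direction, closer to an algorithm, but its bijectivity and the connection to set partitions are left implicit.
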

\begin{proof}
Let $(G^{n},xy)$ generate $xy^k$ via the sequence $s=s_1,s_2,\ldots,s_{n+1}$. Let $p_j$ the number of ones in the subsequence $s_1,s_2,\ldots,s_{j-1}$, for $j=2,\ldots,n+1$. Clearly, $s_j\leq p_j+1$ for all $j=1,2,\ldots,n+1$.
Associated to the sequence $s$ we define a contraction $C(s)$ on $2n+2$ vertices $1,2,\ldots,2n+2$, where the vertices $2j-1$ (resp. $2j$) are colored black (resp. white), for all $j=1,2,\ldots,n+1$, as follows. From left to right, for each $j=2,3,\ldots,n+1$, we do the following: If $s_j\neq 1$, then we connect the vertex $2j-1$ with the $(s_j-1)$-st white vertex, counted to the left side of $2j-1$, that has not been used (there exists such a vertex since $s_j\leq p_j+1$); if $s_j=1$, then the vertex $2j-1$ remains unconnected. Hence, $C(s)$ is a contraction of $(b^{\dagger}b)^{n+1}$.

On the other hand, if $C$ is a contraction of  $(b^{\dagger}b)^{n+1}$, then the sequence $s_1=1,s_2,\ldots,s_{n+1}$ can be defined as follows. We label each edge that connects a black vertex $v$ with a white vertex $w$ by the number of white vertices between $v$ and $w$ that are not connected to any black vertex $y<v$. Now, for each $j=2,3,\ldots,n+1$, we define $s_j=1$ if there exists no edge for the vertex $2j-1$, otherwise define $s_j$ by the requirement that $s_j-2$ equals the label of this edge. This implies that $s_1=1$ and $s_j\leq p_j+1$ for all $j=2,3,\ldots,n+1$, where $p_j$ counts the number of black vertices in the set $1,3,\ldots,2j-1$ of degree zero. Hence, the sequence $s_1,s_2,\ldots,s_{n+1}$ defines a word that is generated by $(G^n,xy)$, completing the proof.
\end{proof}

For instance, if $n=2$, then the multiset of words that is generated by $(G^2,xy)$ is given by $xy^3$; $xy^2$; $xy^2$; $xy^2$ and $xy$ via the sequences $1,1,1$; $1,1,2$; $1,1,3$; $1,2,1$ and $1,2,2$, respectively, see Example~\ref{ExStir}. The corresponding contractions are shown in Figure~\ref{f}.
\begin{figure}[h]
\begin{center}
\begin{pspicture}(0,0)(14,.4)
\setlength{\unitlength}{3mm} \linethickness{0.3pt}
\multips(0,0)(3,0){5}{\pscircle*(0,0){.2}\pscircle(.4,0){.2}\pscircle*(.8,0){.2}\pscircle(1.2,0){.2}\pscircle*(1.6,0){.2}\pscircle(2,0){.2}}
\linethickness{0.8pt}
\put(.1,0){\qbezier(13.9,.1)(14.6,1)(15.3,.1)}
\put(-2.6,0){\qbezier(23.9,.1)(26,1)(28.0,.1)}
\put(-2.2,0){\qbezier(33.5,.1)(34.2,1)(34.8,.1)}
\qbezier(43.9,.1)(44.6,1)(45.4,.1)\put(-2.6,0){\qbezier(43.9,.1)(44.6,1)(45.4,.1)}
\put(-.2,-1.2){$1$}\put(1.1,-1.2){$2$}\put(2.4,-1.2){$3$}\put(3.7,-1.2){$4$}\put(5.0,-1.2){$5$}\put(6.3,-1.2){$6$}
\put(9.7,-1.2){$1$}\put(11.1,-1.2){$2$}\put(12.4,-1.2){$3$}\put(13.7,-1.2){$4$}\put(15.0,-1.2){$5$}\put(16.3,-1.2){$6$}
\put(19.7,-1.2){$1$}\put(21.1,-1.2){$2$}\put(22.4,-1.2){$3$}\put(23.7,-1.2){$4$}\put(25.0,-1.2){$5$}\put(26.3,-1.2){$6$}
\put(29.7,-1.2){$1$}\put(31.1,-1.2){$2$}\put(32.4,-1.2){$3$}\put(33.7,-1.2){$4$}\put(35.0,-1.2){$5$}\put(36.3,-1.2){$6$}
\put(39.7,-1.2){$1$}\put(41.1,-1.2){$2$}\put(42.4,-1.2){$3$}\put(43.7,-1.2){$4$}\put(45.0,-1.2){$5$}\put(46.3,-1.2){$6$}
\end{pspicture}
\end{center}
\caption{The linear representation of the contractions of the word $(b^{\dagger}b)^3$.}\label{f}
\end{figure}
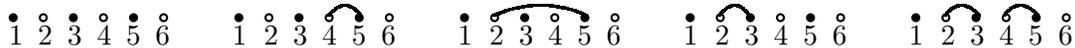
Note that there are $\Stirling{3}{3-k}$ contractions where exactly $k$ pairs are contracted. For instance, there is $\Stirling{3}{3}=1 $ contraction where no pairs are contracted, corresponding to the word $xy^3$. There are $\Stirling{3}{2}=3$ contractions where exactly one pair is contracted, corresponding to the word $xy^2$. Finally, there is $\Stirling{3}{1}=1$ contraction where exactly two pairs are contracted, corresponding to the word $xy$.

As a corollary to the proof of the above theorem, we obtain the following result.

\begin{corollary}
Let $P_n$ be the set of all sequences $s_1,s_2,\ldots,s_n$ such that $s_1=1$ and $s_j\leq |\{i|s_i=1,i<j\}|+1$, for all $j=2,3,\ldots,n$. Then the cardinality of $P_n$ is given by $B_n=\sum_{k=1}^n\Stirling{n}{k}$, the $n$-th Bell number. Moreover, the number of sequences in $P_n$ that contain exactly $k$ ones is given by $\Stirling{n}{k}$, the Stirling number of the second kind.
\end{corollary}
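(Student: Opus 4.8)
The plan is to extract the corollary directly from the bijection $s\mapsto C(s)$ built in the proof of the theorem, after the harmless index shift $n\mapsto n-1$. First I would observe that, applying that construction to $(G^{n-1},xy)$, the admissible sequences are exactly $1=s_1,s_2,\ldots,s_n$ with $s_j\leq p_j+1$, where $p_j$ is the number of ones among $s_1,\ldots,s_{j-1}$; this is verbatim the membership condition defining $P_n$. Hence $P_n$ is precisely the index set of the bijection of the theorem (with $n+1$ replaced by $n$), so $|P_n|$ equals the number of contractions of $(b^{\dagger}b)^n$.

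Next I would keep track, through $C(\cdot)$, of the number of ones of a sequence $s\in P_n$. By the definition of $C(s)$, for each $j\geq 2$ the black vertex $2j-1$ is left unconnected precisely when $s_j=1$, while the black vertex $1$ (the case $j=1$, $s_1=1$) is always of degree zero; therefore the number of ones of $s$ equals the number of degree-zero black vertices of $C(s)$. Since $C(s)$ is a contraction of $(b^{\dagger}b)^n$, it has exactly $n$ black vertices and every contracted pair uses one of them, so $C(s)$ has exactly $k$ degree-zero black vertices if and only if exactly $n-k$ pairs are contracted.

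Finally I would invoke Wick's theorem \eqref{s0eq1} in the form recalled after \eqref{boson-Stirling}: the number of contractions of $(b^{\dagger}b)^n$ with exactly $n-k$ pairs contracted is $\Stirling{n}{k}$. Combined with the previous step, the number of sequences in $P_n$ containing exactly $k$ ones is $\Stirling{n}{k}$, and summing over $k$ --- the range being $1\leq k\leq n$, since $s_1=1$ forces at least one $1$ and there are at most $n$ entries --- gives $|P_n|=\sum_{k=1}^n\Stirling{n}{k}=B_n$.

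The only thing to be careful about is pure bookkeeping: checking that the defining inequality of $P_n$ coincides with the admissibility condition in the theorem's proof under the shift $n\mapsto n-1$, and that the construction of $C$ really does turn ``$s_j=1$'' into ``black vertex $2j-1$ unconnected'' with no off-by-one error. There is no substantive obstacle; the corollary is just the theorem refined by the (equivalent) statistics ``number of ones'' and ``number of contracted pairs''.
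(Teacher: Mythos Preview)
Your proposal is correct and is essentially the paper's intended approach: the paper offers no separate argument but simply declares the statement ``a corollary to the proof of the above theorem,'' and what you have written is exactly the unpacking of that claim---identifying $P_n$ with the admissible sequences for $(G^{n-1},xy)$, carrying the statistic ``number of ones'' through the bijection $C$ to ``number of degree-zero black vertices'' (equivalently, $n$ minus the number of contracted pairs), and then invoking the fact stated after~\eqref{boson-Stirling} that contractions of $(b^{\dagger}b)^n$ with exactly $n-k$ contracted pairs are counted by $\Stirling{n}{k}$. Your bookkeeping checks (the shift $n\mapsto n-1$ and the role of $s_1=1$ for vertex~$1$) are all in order.
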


Before closing this section, let us remark that due to $D(x)=xy$ one has $(G^{n},xy)=(G^{n+1},x)$, showing that one has a bijection between the set of contractions of $(b^{\dagger}b)^{n}$ and the multiset of words that are generated by $(G^n,x)$.

\section{On the context-free grammar $G=\{x\rightarrow px+xy, y\rightarrow y\}$}\label{sec-2:generalized Stirling numbers}
Let $F(a,a^{\dag})$ be a possibly infinite word on the alphabet $\{a,a^{\dag}\}$.
Following~\cite{Mansour0702}, we define $\mathcal{C}(F(a,a^{\dag}))$ to be the multiset of all words obtained by substituting $a=e$ and $a^{\dag}=e^{\dag}$ whenever $a$ precedes $a^{\dag}$. Moreover, we replace any two adjacent letters $e$ and $e^{\dag}$ with $p$. For
each word $\pi\in \mathcal{C}(F(a,a^{\dag}))$, set $\widehat{\pi}=(a^{\dag})^ua^vp^\omega$ for some $u,v,\omega\geq 0$.

We define
$$\mn_p[F(a,a^{\dag})]=\sum_{\pi\in \mathcal{C}(F(a,a^{\dag}))}\widehat{\pi}.$$
According to~\cite{Mansour0702}, we have the following result.
\begin{theorem}
For all $n\geq 1$, we have
$$\mn_p[(a^{\dag}a)^n]=\sum_{k=0}^nS_p(n,k)(a^{\dag})^ka^k,$$
where $S_p(n,k)$ satisfies the recurrence relation
\begin{equation}\label{Spnk-recu}
S_p(n,k)=(k-1+p)S_p(n-1,k)+S_p(n-1,k-1)
\end{equation}
with the initial conditions $S_p(n,1)=p^{n-1}$ and $S_p(n,k)=0$ for $k>n$.
\end{theorem}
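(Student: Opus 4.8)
The plan is to prove the displayed identity by induction on $n$, extracting along the way the recurrence \eqref{Spnk-recu} and the boundary values, in complete analogy with the way \eqref{boson-Stirling} is obtained when $p=1$. The first task is to pin down what the coefficients of $\mn_p[(a^{\dagger}a)^n]$ actually count. Every word $\pi\in\mathcal{C}((a^{\dagger}a)^n)$ comes from a \emph{contraction} of $(a^{\dagger}a)^n$, i.e.\ a partial matching of some of the $n$ occurrences of $a$ with strictly later occurrences of $a^{\dagger}$; if the matching uses $n-k$ pairs, then after the substitutions $a=e$, $a^{\dagger}=e^{\dagger}$, the replacement of adjacent $ee^{\dagger}$ by $p$, and the double dot operation, the word $\widehat{\pi}$ has the form $(a^{\dagger})^{k}a^{k}p^{\omega(\pi)}$ for some $\omega(\pi)\ge 0$ (the number of surviving $a^{\dagger}$ and of surviving $a$ are forced to coincide because $(a^{\dagger}a)^n$ is balanced). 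Hence $\mn_p[(a^{\dagger}a)^n]=\sum_{k}\bigl(\sum_{\pi}p^{\omega(\pi)}\bigr)(a^{\dagger})^{k}a^{k}$, the inner sum running over contractions leaving exactly $k$ uncontracted $a^{\dagger}$'s, and it remains to show that this inner sum $S_p(n,k)$ obeys \eqref{Spnk-recu}.

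For the recurrence I would peel off the last block of $(a^{\dagger}a)^n=(a^{\dagger}a)^{n-1}(a^{\dagger}a)$. The occurrence of $a$ in the last block has nothing to its right, so it is never matched and always survives as one of the uncontracted $a$'s; the occurrence of $a^{\dagger}$ in the last block is either unmatched, or matched with one of the uncontracted $a$'s coming from the prefix $(a^{\dagger}a)^{n-1}$. Assuming inductively that $\mn_p[(a^{\dagger}a)^{n-1}]=\sum_{k}S_p(n-1,k)(a^{\dagger})^{k}a^{k}$, I would append $a^{\dagger}a$ and bring each resulting monomial back to normal form. Appending $a^{\dagger}a$ to $(a^{\dagger})^{k}a^{k}$ and recontracting yields: the term $(a^{\dagger})^{k+1}a^{k+1}$ when the new $a^{\dagger}$ stays unmatched; one term $(a^{\dagger})^{k}a^{k}p$ when the new $a^{\dagger}$ is matched with the closest available occurrence of $a$, since then the two members of this new pair end up adjacent (nothing but the inert earlier $p$'s between them) and generate one further letter $p$; and the term $(a^{\dagger})^{k}a^{k}$ in each of the other $k-1$ ways of matching the new $a^{\dagger}$ with an occurrence of $a$, since an uncontracted $a$ then survives between the two members and no new $p$ is produced. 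Summing these with the weight $S_p(n-1,k)$ and comparing the coefficients of $(a^{\dagger})^{k}a^{k}$ gives exactly $S_p(n,k)=(k-1+p)S_p(n-1,k)+S_p(n-1,k-1)$.

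The boundary values then follow at once. A contraction of $(a^{\dagger}a)^n$ leaves at most $n$ uncontracted $a^{\dagger}$'s, and, because its rightmost $a$ is always unmatched, at least one; hence $S_p(n,k)=0$ for $k>n$ and also $S_p(n,0)=0$. Feeding $k=1$ (with $S_p(n-1,0)=0$) into the recurrence gives $S_p(n,1)=p\,S_p(n-1,1)$, and since the only contraction of $a^{\dagger}a$ is the empty one we have $S_p(1,1)=1$, so $S_p(n,1)=p^{n-1}$. Together with the recurrence this determines the $S_p(n,k)$ completely and finishes the induction.

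The step I expect to be the main obstacle is making the ``append and renormalize'' manipulation rigorous — that is, a Wick-type factorization of $\mn_p$ that is compatible with the statistic $\omega$. The subtlety is that $\omega$ is \emph{nonlocal}: whether a contracted pair contributes a letter $p$ depends on whether anything survives strictly between its two members, while writing $\mn_p[(a^{\dagger}a)^{n-1}]$ as a combination of monomials $(a^{\dagger})^{k}a^{k}$ already invokes the double dot reordering, which shuffles letters around, and the earlier contractions have deposited inert $p$'s in the word. One must verify that neither the reordering nor those $p$'s change the count of new $p$'s created when the last $a^{\dagger}$ is matched — equivalently, one needs a weight-preserving bijection between contractions of $(a^{\dagger}a)^n$ and pairs consisting of a contraction of $(a^{\dagger}a)^{n-1}$ and a purely local choice for the last block. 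Once this factorization is in place, the induction above goes through routinely, and the same argument actually identifies $\mn_p[(a^{\dagger}a)^n]$ term by term, not merely up to the recurrence.
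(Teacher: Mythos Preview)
The paper does not actually prove this theorem; it is quoted verbatim from the earlier paper~\cite{Mansour0702} (``According to~\cite{Mansour0702}, we have the following result''), so there is no in-paper proof to compare your attempt against. What the paper \emph{does} contain that bears on the statement is Theorem~\ref{BijectionP}, which pins down combinatorially what the exponent $\omega$ really is: it is the number of edges in the contraction that join \emph{adjacent} vertices, i.e.\ matched pairs of the form $(2i,2i+1)$.

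Your inductive strategy is the right one, and once $\omega$ is understood in this way the argument you outline goes through with no difficulty. Peeling off the last block $a^{\dagger}a$: the new $a$ at position $2n$ is never matched; the new $a^{\dagger}$ at position $2n-1$ is either unmatched (contributing to $S_p(n,k)$ from $S_p(n-1,k-1)$), or matched with one of the $k$ uncontracted $a$'s of the prefix. The rightmost such $a$ is always at position $2n-2$ (the final letter of the prefix can never be matched inside the prefix), so exactly one of the $k$ choices produces the adjacent edge $(2n-2,2n-1)$ and hence a factor $p$; the other $k-1$ choices do not. Since ``adjacent'' is a property of a single edge, adding the new edge cannot alter the $p$-contribution of any old edge, and the recurrence $S_p(n,k)=(k-1+p)S_p(n-1,k)+S_p(n-1,k-1)$ drops out immediately.

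Where your write-up wobbles is in the description of $\omega$ itself. You treat it as arising from an iterative ``collapse adjacent $ee^{\dagger}$ pairs, with $p$'s inert'' process, and then worry that this makes $\omega$ nonlocal and the factorization delicate. That reading of the definition would indeed cause trouble: for instance, in $(a^{\dagger}a)^4$ the matching $\{(2,5),(4,7)\}$ has substituted word $a^{\dagger}e\,a^{\dagger}e\,e^{\dagger}a\,e^{\dagger}a$, in which the letters at positions $4,5$ are an adjacent $ee^{\dagger}$ pair even though neither matched pair is itself adjacent; your rule ``no new $p$ unless the closest $a$ is chosen'' would then fail. The resolution is that $\omega$ is \emph{not} computed this way. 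The paper's own Theorem~\ref{BijectionP} (and the verification that it reproduces $S_p(4,2)=1+3p+3p^2$) shows that $\omega$ simply counts edges of the matching joining consecutive positions. That statistic is entirely local, your feared obstruction evaporates, and your induction becomes routine.
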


In the following theorem, a grammatical characterization of the numbers $S_p(n,k)$ is given.
\begin{theorem}\label{pStirling}
If  $G=\{x\rightarrow px+xy, y\rightarrow y\}$, then
$$D^{n-1}(x)=x\sum_{k=1}^{n}S_p(n,k)y^{k-1},\quad {\text for}\quad n\geq 2.$$
\end{theorem}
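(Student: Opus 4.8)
The natural route is induction on $n$, showing that the right-hand side $x\sum_{k=1}^{n}S_p(n,k)y^{k-1}$ obeys exactly the transformation that one application of $D$ induces. For the base case it is convenient to begin at $n=1$, where $D^{0}(x)=x=x\,S_p(1,1)y^{0}$ holds because the initial condition gives $S_p(1,1)=p^{0}=1$; the case $n=2$ is then the first genuine check, and indeed $D(x)=px+xy=x\bigl(S_p(2,1)+S_p(2,2)\,y\bigr)$ since $S_p(2,1)=p^{2-1}=p$ and $S_p(2,2)=(p+1)S_p(1,2)+S_p(1,1)=S_p(1,1)=1$ by \eqref{Spnk-recu}.

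For the inductive step, assume $D^{n-1}(x)=x\sum_{k=1}^{n}S_p(n,k)y^{k-1}$. Applying $D$ and using the product rule together with $D(x)=px+xy$ and $D(y)=y$ (so that $D(y^{k-1})=(k-1)y^{k-1}$), I expect to obtain
\begin{align*}
D^{n}(x)&=(px+xy)\sum_{k=1}^{n}S_p(n,k)y^{k-1}+x\sum_{k=1}^{n}(k-1)S_p(n,k)y^{k-1}\\
&=x\sum_{k=1}^{n}(p+k-1)S_p(n,k)y^{k-1}+x\sum_{k=1}^{n}S_p(n,k)y^{k}.
\end{align*}
Re-indexing the last sum via $k\mapsto k-1$ and merging it with the first, using the conventions $S_p(n,0)=0$ and $S_p(n,n+1)=0$ to absorb the two boundary terms, the coefficient of $y^{k-1}$ for $1\le k\le n+1$ becomes $(p+k-1)S_p(n,k)+S_p(n,k-1)$, which is precisely $S_p(n+1,k)$ by \eqref{Spnk-recu} (with $n$ there replaced by $n+1$). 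Hence $D^{n}(x)=x\sum_{k=1}^{n+1}S_p(n+1,k)y^{k-1}$, which is the desired identity with $n$ replaced by $n+1$, completing the induction.

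The argument is essentially mechanical; there is no serious obstacle. The only points that demand care are checking that the stated initial conditions for $S_p(n,k)$ are internally compatible — in particular $S_p(1,1)=p^{0}=1$, which is what makes both the base case and the re-indexing step land correctly — and the careful treatment of the two boundary terms when the two sums are combined into a single sum matching the recurrence \eqref{Spnk-recu}.
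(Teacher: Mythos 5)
Your proof is correct and follows essentially the same route as the paper: both apply $D$ to $x\sum_k S_p(n,k)y^{k-1}$, extract the recurrence $(k-1+p)S_p(n,k)+S_p(n,k-1)$ for the new coefficients, and match it with \eqref{Spnk-recu} together with the initial conditions. Your version is simply phrased as an explicit induction and is somewhat more careful about the boundary conventions $S_p(n,0)=0$ and $S_p(n,n+1)=0$, which the paper leaves implicit.
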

\begin{proof}
Note that $D(x)=x(p+y)$. We define $h(n,k)$ by
$$D^{n-1}(x)=x\sum_{k=1}^{n}h(n,k)y^{k-1}.$$
Since
$$D(D^{n-1}(x))=x\sum_k(k-1+p)h(n,k)y^{k-1}+x\sum_kh(n,k)y^k$$
it follows that
\begin{equation}\label{hnk-recu}
h(n+1,k)=(k-1+p)h(n,k)+h(n,k-1).
\end{equation}
By comparing~\eqref{hnk-recu} with~\eqref{Spnk-recu},  we see that the coefficients $h(n,k)$ satisfy the same recurrence relation and
initial conditions as $S_p(n,k)$, so they agree.
\end{proof}

Our next aim is to prove combinatorially Theorem~\ref{BijectionP}. In order to do that, we introduce the following notation and definitions. Let $G=\{x\rightarrow px+xy, y\rightarrow y\}$ be the context-free grammar. By induction, it is not hard to see that the monomials of $D^n(x)$ have the form $xy^k$ (we omit the coefficients), for $0\leq k\leq n$. For instance, $D^2(x)=p^2x+(2p+1)xy+xy^2$, which implies that the monomials of $D^2(x)$ are $x$, $xy$ and $xy^2$. We say that $(G^d,x)$ {\em generates $xy^k$ via the sequence $1=s_1,s_2,\ldots,s_{d+1}$} if there exists a sequence of words $a_n$ such that $a_1=x$, $a_{d+1}=xy^k$, and for all $j=2,3,\ldots,d+1$,
\begin{align*}
a_j=\left\{\begin{array}{ll}
xa_{j-1}^{(2)}\cdots a_{j-1}^{(|a_{j-1}|)},& s_j=1,\\
xya_{j-1}^{(2)}\cdots a_{j-1}^{(|a_{j-1}|)},& s_j=2,\\
a_{j-1}^{(1)}\cdots a_{j-1}^{(s_j-2)}a_{j-1}^{(s_j-1)}ya_{j-1}^{(s_j+1)}\cdots a_{j-1}^{(|a_{j-1}|)},& s_j\geq3.\\
\end{array}\right.
\end{align*}
Thus, $s_j=1$ means we choose the first letter - which is $x$ - and use the first part of the rule $x \rightarrow px+xy$ (namely $px$) in step $j$, $s_j=2$ means the same except that we choose the second part of the rule (namely $xy$). If $s_j=k$ with $k\geq 3$, then we choose the $(k-1)$-st letter  - which is $y$ - and use the second rule $y \rightarrow y$.

For instance, $(G^0,x)$ generates $x$ by the sequence $1$, $(G^1,x)$ generates $x$ by the sequence $1,1$ and generates $xy$ by $1,2$, and $(G^2,x)$ generates $x$ by the sequence $1,1,1$, and $xy$ by the sequences $1,1,2$, $1,2,1$ and $1,2,3$, and generates $xy^2$ by the sequence $1,2,2$. Clearly, each monomial $xy^k$ can be determined uniquely by its sequence, and the set of these sequences $s_1,s_2,\ldots,s_{n+1}$ satisfies $s_1=1$ and $1\leq s_j\leq q_j+2=|\{i|s_i=2,i\leq j-1\}|+2$, for all $j=2,3,\ldots,n+1$.

\begin{theorem}\label{BijectionP}
Let $G=\{x\rightarrow px+xy, y\rightarrow y\}$ be the context-free grammar. Then there exists a bijection between the set of contractions of $(b^{\dagger}b)^{n+1}$ with exactly $m$ edges that connect two adjacent vertices and with exactly $\ell$ black vertices of degree zero and the multiset of words that are generated by $(G^n,x)$ of the form $p^mxy^\ell$.
\end{theorem}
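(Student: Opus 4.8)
The plan is to adapt the bijection from the proof of the (unnumbered) theorem at the end of Section~2 to the refined grammar $G=\{x\rightarrow px+xy,\, y\rightarrow y\}$, keeping careful track of the two new statistics. The idea is that the sequence $1=s_1,s_2,\ldots,s_{n+1}$ encoding how $(G^n,x)$ generates $p^m x y^\ell$ plays exactly the role of the old Stirling sequence, except that now an index $j$ with $s_j=1$ (choosing the summand $px$) corresponds to contracting the pair of adjacent vertices $(2j-2,2j-1)$ — producing a factor $p$ — whereas $s_j=2$ corresponds to leaving the black vertex $2j-1$ of degree zero, and $s_j\ge 3$ corresponds to connecting $2j-1$ to a previously unused white vertex further to the left. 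First I would recall from the earlier proof the correspondence between contractions of $(b^\dagger b)^{n+1}$ and sequences, and then redistribute the cases: the earlier "$s_j=1$" case (black vertex unconnected) splits here into "$s_j=1$" (adjacent contraction, factor $p$) and "$s_j=2$" (genuine degree-zero black vertex, factor $y$), with the shift in index values $\ge 3$ mirroring the shift $y^{k-1}$ versus $y^k$ already visible in Theorem~\ref{pStirling}.

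The construction of the map $C$ would go as follows. Given the sequence $s$ with $s_1=1$ and $1\le s_j\le q_j+2$ where $q_j=|\{i\le j-1 : s_i=2\}|$, define a contraction on the $2n+2$ vertices $1,2,\ldots,2n+2$ (vertex $2j-1$ black, $2j$ white). For each $j=2,\ldots,n+1$: if $s_j=1$, join black vertex $2j-1$ to the white vertex $2j-2$ immediately to its left (this is always available since vertex $2j-2$ has not yet been touched when we process step $j$, by the left-to-right order); if $s_j=2$, leave $2j-1$ of degree zero; if $s_j\ge 3$, join $2j-1$ to the $(s_j-2)$-nd white vertex to its left that is not yet used and is not connected to an earlier black vertex — such a vertex exists precisely because $s_j-2\le q_j$, the number of degree-zero black vertices produced so far, which equals the number of "spare" white vertices available. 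Then $m=|\{j:s_j=1\}|$ is the number of adjacent-pair edges and $\ell=|\{j:s_j=2\}|$ is the number of degree-zero black vertices, matching the monomial $p^m x y^\ell$. The inverse map reads off $s$ from a contraction by the same labelling trick used before: an edge from black $v$ to white $w$ gets labelled by the number of white vertices strictly between them that are not connected to any black vertex $<v$; set $s_j=1$ if $2j-1$ is connected to $2j-2$, $s_j=2$ if $2j-1$ has degree zero, and $s_j$ otherwise so that $s_j-2$ equals the edge label plus one (the "plus one" because an adjacent edge has label $0$ but is now reserved for $s_j=1$).

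The main obstacle, and the place requiring genuine care, is the bookkeeping that an adjacent-pair edge $(2j-2,2j-1)$ must be cleanly separated from a length-one edge that happens to skip zero spare white vertices; concretely, I must verify that the white vertex $2j-2$ immediately left of $2j-1$ is always the one consumed by the summand $px$, and never accidentally claimed as an $s_{j'}\ge 3$ connection for some later $j'$, so that the count of $p$-factors genuinely equals $m$. This hinges on the observation that in the grammar rule $x\rightarrow px+xy$ the letter $x$ stays in the first position, so the white vertex created alongside a given $x$-step is exactly the one adjacent to the black vertex of the next step; I would make this precise by an induction on $n$ showing that at each stage the leftmost letter of $a_j$ is $x$ and that the white vertices "owned" by degree-zero black vertices are exactly those indexed by the positions of $y$'s in $a_j$. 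Once this alignment is established, the statement that $C$ and its inverse are mutually inverse, and that they preserve $m$ and $\ell$, follows exactly as in the earlier proof, so I would keep that verification brief and refer back to it.
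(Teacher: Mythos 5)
Your overall strategy is exactly the paper's: encode each generation of $(G^n,x)$ by its sequence $1=s_1,s_2,\ldots,s_{n+1}$ with $1\le s_j\le q_j+2$, let $s_j=1$ produce the adjacent edge $(2j-2,2j-1)$ (factor $p$), $s_j=2$ a degree-zero black vertex (factor $y$), $s_j\ge 3$ a longer edge, and recover $s$ from a contraction via the label of the edge at $2j-1$. But your handling of the case $s_j\ge 3$ contains an off-by-one that makes your two maps fail to be mutually inverse. At step $j$ there are $q_j+1$ unused white vertices to the left of $2j-1$ (not $q_j$, as you assert), and the nearest of them is always the adjacent vertex $2j-2$, since no earlier black vertex can reach it. Consequently, if your forward rule for $s_j\ge3$ counts all unused white vertices nearest-first, then $s_j=3$ grabs $2j-2$ and creates an adjacent edge: its image coincides with that of $s_j=1$, injectivity fails, and the number of adjacent edges exceeds the exponent $m$ of $p$ (for $s=1,2,1,3$, whose monomial is $pxy$, you would draw the two adjacent edges $(4,5)$ and $(6,7)$). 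The rule that actually works --- and the one your existence bound $s_j-2\le q_j$ is implicitly using --- is to connect $2j-1$ to the unique unused white vertex having exactly $s_j-2$ white vertices between it and $2j-1$ that are not connected to any earlier black vertex; equivalently, skip the adjacent vertex $2j-2$ and then $s_j-3$ further unused white vertices.

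With that forward map, your inverse acquires a spurious ``plus one''. Since $2j-2$ is never connected to a black vertex smaller than $2j-1$, every non-adjacent edge at $2j-1$ automatically has label at least $1$; so label $0$ is already reserved for the adjacent edge, and the labels $1,\ldots,q_j$ correspond to $s_j=3,\ldots,q_j+2$ simply via $s_j=\mathrm{label}+2$, exactly as in the Section~2 bijection and in the paper's proof --- no shift is needed. Under your rule $s_j-2=\mathrm{label}+1$, no contraction ever decodes to a sequence containing the value $3$ (that would require label $0$, which you reserve for $s_j=1$), although such sequences occur and generate monomials; moreover the decoded value can leave the admissible range: the contraction of $(b^{\dagger}b)^4$ with edges $(4,5)$ and $(2,7)$ decodes under your rule to $1,2,1,4$, violating $s_4\le q_4+2=3$, whereas it should decode to $1,2,1,3$. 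Finally, the ``main obstacle'' you isolate is not the delicate point: a later black vertex claiming $2j-2$ only creates a non-adjacent edge and does not disturb the count $m$; what must be ensured is that the $s_j\ge3$ rule at step $j$ itself never produces the edge $(2j-2,2j-1)$, which the corrected rule guarantees automatically. With these local repairs your argument coincides with the paper's proof.
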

\begin{proof}
Let $(G^{n},x)$ generate $p^mxy^{\ell}$ via the sequence $s=s_1,s_2,\ldots,s_{n+1}$. Associated to the sequence $s$ we define a contraction $C(s)$ on $2n+2$ vertices $1,2,\ldots,2n+2$, where the vertices $2j-1$ (resp. $2j$) are colored black (resp. white), for all $j=1,2,\ldots,n+1$, as follows. From left to right, for each $j=2,3,\ldots,n+1$, we do the following: If $s_j\neq2$, then we connect the vertex $2j-1$ either with the first left white vertex of degree zero when $s_j=1$, or with the $(s_j-2)$-th white vertex, counted to the left side of $2j-1$, that has not been used (there exists such a vertex since $1\leq s_j\leq q_j+2$); if $s_j=2$, then the vertex $2j-1$ remains unconnected. Hence, $C(s)$ is a contraction of $(b^{\dagger}b)^{n+1}$. Note that after starting the generation from $x$, each time one adds a factor of $y$ means one uses the second part of $x \rightarrow px+xy$. Thus, $y^{\ell}$ means that there are $\ell$ 2's in the sequence $s$. Consequently, there are $\ell$ black vertices of degree zero. Similarly, each time one uses the first part of $x \rightarrow px+xy$, one adds a one to the sequence. Thus, $p^{m}$ means that there are $m$ ones in the sequence $s$. Consequently, there are $m$ edges connecting adjacent vertices in the contraction.

On the other hand, if $C$ is a contraction of  $(b^{\dagger}b)^{n+1}$ with exactly $m$ edges that connect two adjacent vertices and with exactly $\ell$ black vertices of degree zero, then the sequence $s_1=1,s_2,\ldots,s_{n+1}$ can be defined as follows. We label each edge that connects a black vertex $v$ with a white vertex $w$ by the number of white vertices between $v$ and $w$ that are not connected to any black vertex $y<v$. Now, for each $j=2,3,\ldots,n+1$, we define $s_j=2$ if there exists no edge for the vertex $2j-1$, $s_j=1$ if there exists an edge for the vertex $2j-1$ with label $0$, and otherwise define $s_j$ by the requirement that $s_j-2$ equals the label of this edge. This implies that $s_1=1$ and $1\leq s_j\leq q_j+2$ for all $j=2,3,\ldots,n+1$, where $q_j$ counts the number of black vertices in the set $1,3,\ldots,2j-1$ of degree zero. Hence, the sequence $s_1,s_2,\ldots,s_{n+1}$ defines a word that is generated by $(G^n,x)$. Now, it remains to characterize the coefficient of the monomial that is generated via the sequence $s=s_1,s_2,\ldots,s_{n+1}$. By the construction, the monomial is given by $xy^{|\{i|s_i=2,i=2,3,\ldots,n+1\}|}=xy^{\ell}$ with coefficient $p^{|\{i|s_i=1,i=2,3,\ldots,n+1\}|}=p^m$.
\end{proof}

In Figure~\ref{f22}, the linear representation of all $B(4)=15$ contractions of $(b^{\dag}b)^4$ are shown, together with the resulting sequences as defined in the proof of Theorem~\ref{BijectionP}. As an example, consider the contraction shown in the middle of the third row. Its associated sequence is $s=1,2,1,3$. Thus, there exist $\ell=1$ 2's in the sequence and $m=1$ ones, so the associated monomial is given by $pxy$. On the other hand, we can generate this monomial explicitly using the sequence 1,2,1,3. Associated to the sequence 1 is the monomial $x$, to 1,2 the monomial $xy$, to 1,2,1 the monomial $pxy$, and to 1,2,1,3 the monomial $pxy$, as expected.

\begin{figure}[h]
\begin{center}
\begin{pspicture}(0,-1)(11,.4)
\setlength{\unitlength}{3mm} \linethickness{0.3pt}
\multips(0,0)(4,0){3}{\pscircle*(0,0){.2}\pscircle(.4,0){.2}\pscircle*(.8,0){.2}\pscircle(1.2,0){.2}\pscircle*(1.6,0){.2}\pscircle(2,0){.2}\pscircle*(2.4,0){.2}\pscircle(2.8,0){.2}}
\linethickness{0.8pt}
\qbezier(1.3,.1)(2,1)(2.7,.1)\put(2.65,0){\qbezier(1.3,.1)(2,1)(2.7,.1)}\put(5.3,0){\qbezier(1.3,.1)(2,1)(2.7,.1)}
\put(13.3,0){\qbezier(1.3,.1)(2,1)(2.7,.1)\put(2.65,0){\qbezier(1.3,.1)(2,1)(2.7,.1)}}
\put(26.6,0){\qbezier(1.3,.1)(2,1)(2.7,.1)\put(5.38,0){\qbezier(1.3,.1)(2,1)(2.7,.1)}}
\put(-.2,-1.2){$1$}\put(1.1,-1.2){$2$}\put(2.4,-1.2){$3$}\put(3.7,-1.2){$4$}\put(5.0,-1.2){$5$}\put(6.3,-1.2){$6$}\put(7.6,-1.2){$7$}\put(8.9,-1.2){$8$}
\put(13.2,0){\put(-.2,-1.2){$1$}\put(1.1,-1.2){$2$}\put(2.4,-1.2){$3$}\put(3.7,-1.2){$4$}\put(5.0,-1.2){$5$}\put(6.4,-1.2){$6$}\put(7.7,-1.2){$7$}\put(9,-1.2){$8$}}
\put(26.6,0){\put(-.2,-1.2){$1$}\put(1.1,-1.2){$2$}\put(2.4,-1.2){$3$}\put(3.7,-1.2){$4$}\put(5.0,-1.2){$5$}\put(6.4,-1.2){$6$}\put(7.7,-1.2){$7$}\put(9,-1.2){$8$}}
\put(1.2,-2.4){$s=1,1,1,1$}\put(14.4,-2.4){$s=1,1,1,2$}\put(27.8,-2.4){$s=1,1,2,1$}
\end{pspicture}
\begin{pspicture}(0,-1)(11,.4)
\setlength{\unitlength}{3mm} \linethickness{0.3pt}
\multips(0,0)(4,0){3}{\pscircle*(0,0){.2}\pscircle(.4,0){.2}\pscircle*(.8,0){.2}\pscircle(1.2,0){.2}\pscircle*(1.6,0){.2}\pscircle(2,0){.2}\pscircle*(2.4,0){.2}\pscircle(2.8,0){.2}}
\linethickness{0.8pt}
\qbezier(1.3,.1)(2,1)(2.7,.1)
\put(13.3,0){\qbezier(1.3,.1)(2,1)(2.7,.1)\put(2.65,0){\qbezier(1.3,.1)(3.4,1)(5.5,.1)}}
\put(26.6,0){\put(2.65,0){\qbezier(1.3,.1)(2,1)(2.7,.1)}\put(5.3,0){\qbezier(1.3,.1)(2,1)(2.7,.1)}}
\put(-.2,-1.2){$1$}\put(1.1,-1.2){$2$}\put(2.4,-1.2){$3$}\put(3.7,-1.2){$4$}\put(5.0,-1.2){$5$}\put(6.3,-1.2){$6$}\put(7.6,-1.2){$7$}\put(8.9,-1.2){$8$}
\put(13.2,0){\put(-.2,-1.2){$1$}\put(1.1,-1.2){$2$}\put(2.4,-1.2){$3$}\put(3.7,-1.2){$4$}\put(5.0,-1.2){$5$}\put(6.4,-1.2){$6$}\put(7.7,-1.2){$7$}\put(9,-1.2){$8$}}
\put(26.6,0){\put(-.2,-1.2){$1$}\put(1.1,-1.2){$2$}\put(2.4,-1.2){$3$}\put(3.7,-1.2){$4$}\put(5.0,-1.2){$5$}\put(6.4,-1.2){$6$}\put(7.7,-1.2){$7$}\put(9,-1.2){$8$}}
\put(1.2,-2.4){$s=1,1,2,2$}\put(14.4,-2.4){$s=1,1,2,3$}\put(27.8,-2.4){$s=1,2,1,1$}
\end{pspicture}
\begin{pspicture}(0,-1)(11,.4)
\setlength{\unitlength}{3mm} \linethickness{0.3pt}
\multips(0,0)(4,0){3}{\pscircle*(0,0){.2}\pscircle(.4,0){.2}\pscircle*(.8,0){.2}\pscircle(1.2,0){.2}\pscircle*(1.6,0){.2}\pscircle(2,0){.2}\pscircle*(2.4,0){.2}\pscircle(2.8,0){.2}}
\linethickness{0.8pt}
\put(2.65,0){\qbezier(1.3,.1)(2,1)(2.7,.1)}
\put(13.2,0){\put(2.65,0){\qbezier(1.3,.1)(2,1)(2.7,.1)}\qbezier(1.3,.1)(4.8,1.6)(8.3,.1)}
\put(26.6,0){\put(5.3,0){\qbezier(1.3,.1)(2,1)(2.7,.1)}}
\put(-.2,-1.2){$1$}\put(1.1,-1.2){$2$}\put(2.4,-1.2){$3$}\put(3.7,-1.2){$4$}\put(5.0,-1.2){$5$}\put(6.3,-1.2){$6$}\put(7.6,-1.2){$7$}\put(8.9,-1.2){$8$}
\put(13.2,0){\put(-.2,-1.2){$1$}\put(1.1,-1.2){$2$}\put(2.4,-1.2){$3$}\put(3.7,-1.2){$4$}\put(5.0,-1.2){$5$}\put(6.4,-1.2){$6$}\put(7.7,-1.2){$7$}\put(9,-1.2){$8$}}
\put(26.6,0){\put(-.2,-1.2){$1$}\put(1.1,-1.2){$2$}\put(2.4,-1.2){$3$}\put(3.7,-1.2){$4$}\put(5.0,-1.2){$5$}\put(6.4,-1.2){$6$}\put(7.7,-1.2){$7$}\put(9,-1.2){$8$}}
\put(1.2,-2.4){$s=1,2,1,2$}\put(14.4,-2.4){$s=1,2,1,3$}\put(27.8,-2.4){$s=1,2,2,1$}
\end{pspicture}
\begin{pspicture}(0,-1)(11,.4)
\setlength{\unitlength}{3mm} \linethickness{0.3pt}
\multips(0,0)(4,0){3}{\pscircle*(0,0){.2}\pscircle(.4,0){.2}\pscircle*(.8,0){.2}\pscircle(1.2,0){.2}\pscircle*(1.6,0){.2}\pscircle(2,0){.2}\pscircle*(2.4,0){.2}\pscircle(2.8,0){.2}}
\linethickness{0.8pt}
\put(13.2,0){\put(2.65,0){\qbezier(1.3,.1)(3.4,1)(5.5,.1)}}
\put(26.6,0){\qbezier(1.3,.1)(4.8,1)(8.3,.1)}
\put(-.2,-1.2){$1$}\put(1.1,-1.2){$2$}\put(2.4,-1.2){$3$}\put(3.7,-1.2){$4$}\put(5.0,-1.2){$5$}\put(6.3,-1.2){$6$}\put(7.6,-1.2){$7$}\put(8.9,-1.2){$8$}
\put(13.2,0){\put(-.2,-1.2){$1$}\put(1.1,-1.2){$2$}\put(2.4,-1.2){$3$}\put(3.7,-1.2){$4$}\put(5.0,-1.2){$5$}\put(6.4,-1.2){$6$}\put(7.7,-1.2){$7$}\put(9,-1.2){$8$}}
\put(26.6,0){\put(-.2,-1.2){$1$}\put(1.1,-1.2){$2$}\put(2.4,-1.2){$3$}\put(3.7,-1.2){$4$}\put(5.0,-1.2){$5$}\put(6.4,-1.2){$6$}\put(7.7,-1.2){$7$}\put(9,-1.2){$8$}}
\put(1.2,-2.4){$s=1,2,2,2$}\put(14.4,-2.4){$s=1,2,2,3$}\put(27.8,-2.4){$s=1,2,2,4$}
\end{pspicture}
\begin{pspicture}(0,-1)(11,.4)
\setlength{\unitlength}{3mm} \linethickness{0.3pt}
\multips(0,0)(4,0){3}{\pscircle*(0,0){.2}\pscircle(.4,0){.2}\pscircle*(.8,0){.2}\pscircle(1.2,0){.2}\pscircle*(1.6,0){.2}\pscircle(2,0){.2}\pscircle*(2.4,0){.2}\pscircle(2.8,0){.2}}
\linethickness{0.8pt}
\qbezier(1.3,.1)(3.4,1)(5.5,.1)\put(5.3,0){\qbezier(1.3,.1)(2,1)(2.7,.1)}
\put(13.2,0){\qbezier(1.3,.1)(3.4,1)(5.5,.1)}
\put(26.6,0){\qbezier(1.3,.1)(3.4,1)(5.5,.1)\qbezier(4.1,.1)(6.2,1)(8.3,.1)}
\put(-.2,-1.2){$1$}\put(1.1,-1.2){$2$}\put(2.4,-1.2){$3$}\put(3.7,-1.2){$4$}\put(5.0,-1.2){$5$}\put(6.3,-1.2){$6$}\put(7.6,-1.2){$7$}\put(8.9,-1.2){$8$}
\put(13.2,0){\put(-.2,-1.2){$1$}\put(1.1,-1.2){$2$}\put(2.4,-1.2){$3$}\put(3.7,-1.2){$4$}\put(5.0,-1.2){$5$}\put(6.4,-1.2){$6$}\put(7.7,-1.2){$7$}\put(9,-1.2){$8$}}
\put(26.6,0){\put(-.2,-1.2){$1$}\put(1.1,-1.2){$2$}\put(2.4,-1.2){$3$}\put(3.7,-1.2){$4$}\put(5.0,-1.2){$5$}\put(6.4,-1.2){$6$}\put(7.7,-1.2){$7$}\put(9,-1.2){$8$}}
\put(1.2,-2.4){$s=1,2,3,1$}\put(14.4,-2.4){$s=1,2,3,2$}\put(27.8,-2.4){$s=1,2,3,3$}
\end{pspicture}
\end{center}
\caption{The linear representation of the contractions of the word $(b^{\dagger}b)^4$.}\label{f22}
\end{figure}
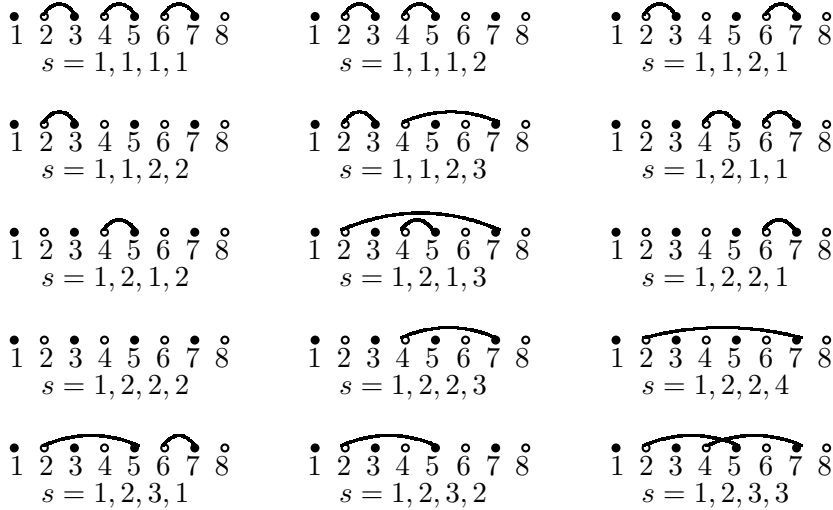

As a corollary to the proof of the above theorem, we obtain the following result.
\begin{corollary}
Let $Q_n$ be the set of all sequences $s_1,s_2,\ldots,s_{n}$ such that $s_1=1$ and $1\leq s_j\leq |\{i|s_i=2,i<j\}|+1$ for all $j=2,3,\ldots,n$. Then the cardinality of $Q_n$ is given by $\sum_{k=1}^{n}\Stirling{n}{k}$, the $n$-th Bell number. Moreover, the number of sequences in $Q_n$ that contain exactly $k$ 2's is given by $\Stirling{n}{k}$, the Stirling number of the second kind.
\end{corollary}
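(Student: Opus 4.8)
The plan is to deduce both assertions from the bijection constructed in the proof of Theorem~\ref{BijectionP}, read off on the sequence side with $p$ specialised to $1$, and then to evaluate the resulting count by Wick's theorem~\eqref{s0eq1} together with~\eqref{boson-Stirling}. (Taken literally the displayed hypotheses force $s_j=1$ for all $j$; I read the intended definition of $Q_n$ as the set of sequences $s_1,\ldots,s_n$ with $s_1=2$ and $1\le s_j\le|\{i<j:s_i=2\}|+1$ for $j\ge2$ --- the exact analogue of the definition of $P_n$ with the role of $1$ played by $2$ --- which is the version for which both assertions hold.) The first step is to identify $Q_n$ with the family of coding sequences occurring in the proof of Theorem~\ref{BijectionP}: for the grammar $G=\{x\rightarrow px+xy, y\rightarrow y\}$ the multiset of words generated by $(G^{n-1},x)$ is encoded by the sequences $t_1,\ldots,t_n$ with $t_1=1$ and $1\le t_j\le|\{i<j:t_i=2\}|+2$, a sequence with exactly $\ell$ twos encoding a word $p^{m}xy^{\ell}$. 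Changing the compulsory initial symbol $t_1=1$ into $t_1=2$ is a bijection of this family onto $Q_n$: the prefix then carries one extra $2$, so the bound $+2$ is replaced by $+1$, and the number of twos increases by exactly one.

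Next I would transport the two statistics through Theorem~\ref{BijectionP}. There a coding sequence with exactly $\ell$ twos is matched with a contraction of $(b^{\dagger}b)^{n}$ having exactly $\ell$ isolated black vertices besides the leftmost one --- which is isolated in every contraction and is recorded by the forced first symbol rather than by a two --- that is, with a contribution to the term $(b^{\dagger})^{\ell+1}b^{\ell+1}$ of $\mathcal{N}((b^{\dagger}b)^{n})$. After the relabelling above this says that a sequence of $Q_n$ with exactly $k$ twos corresponds to a contraction of $(b^{\dagger}b)^{n}$ leaving exactly $k$ creation operators uncontracted. By~\eqref{boson-Stirling} together with Wick's theorem~\eqref{s0eq1}, the number of such contractions equals $\Stirling{n}{k}$, which is the refined statement; summing over $k=1,\ldots,n$ gives $|Q_n|=\sum_{k=1}^{n}\Stirling{n}{k}$, the $n$-th Bell number.

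Independently, and as a cleaner self-contained check, I would exhibit a direct bijection between $Q_n$ and the set partitions of $\{1,\ldots,n\}$: scanning $s_1,\ldots,s_n$ from left to right with $s_1=2$ opening the block $\{1\}$, let $s_j=2$ open a new singleton block, let $s_j=1$ adjoin $j$ to the first block, and let $s_j=k\ge3$ adjoin $j$ to the $(k-1)$-st block in order of creation; the condition $1\le s_j\le|\{i<j:s_i=2\}|+1$ is exactly what makes every step legal, and listing the blocks of a partition in increasing order of their minima inverts the map. Since the number of twos equals the number of blocks, the sequences of $Q_n$ with exactly $k$ twos are counted by the number $\Stirling{n}{k}$ of partitions of $\{1,\ldots,n\}$ into $k$ blocks, and the total is the $n$-th Bell number. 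The step I expect to demand the most care is the bookkeeping of this off-by-one: making the identification of $Q_n$ with the coding sequences precise, and keeping straight which isolated black vertices the twos of a coding sequence record, so that ``exactly $k$ twos'' is matched with ``exactly $k$ surviving creation operators'' rather than with a shifted index. Once that correspondence is pinned down, both claims follow at once from~\eqref{boson-Stirling}.
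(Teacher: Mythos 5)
Your reading of the statement is the right one: as printed, the conditions $s_1=1$ and $1\le s_j\le|\{i<j:\,s_i=2\}|+1$ force $s_j=1$ for every $j$, so the corollary only becomes true after a repair such as yours (taking $s_1=2$ with the bound $+1$; the alternative repair, keeping $s_1=1$ but with the bound $+2$ as in the coding sequences before Theorem~\ref{BijectionP}, would make a sequence with exactly $k$ twos be counted by $\Stirling{n}{k+1}$, so your choice is the one under which both displayed claims hold verbatim). Your first argument is essentially the paper's own, implicit, proof: the corollary is asserted as a consequence of the proof of Theorem~\ref{BijectionP}, namely that the coding sequences correspond bijectively to contractions of $(b^{\dagger}b)^{n}$, whose count according to the number of uncontracted pairs is $\Stirling{n}{k}$ by \eqref{boson-Stirling} together with Wick's theorem \eqref{s0eq1}; you also treat correctly the off-by-one the paper glosses over, namely that the leftmost black vertex has degree zero in every contraction and is recorded by the forced initial symbol rather than by a $2$, so ``exactly $k$ twos'' matches ``exactly $k$ surviving creation operators'' only after your relabelling of the first entry. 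Your second argument --- the restricted-growth-type bijection between $Q_n$ and set partitions of $\{1,\ldots,n\}$ with blocks listed in order of creation --- is a genuinely different, self-contained route that the paper does not give: it bypasses normal ordering and the grammar entirely, produces the refinement by the number of $2$'s (equal to the number of blocks) directly, and thereby serves as an independent verification of the corrected statement.
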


\section{On the context-free grammars $G_n=\{x\rightarrow (n-1)x+xy, y\rightarrow y\}$}\label{sec-4:generalized Stirling numbers}
The falling factorial is defined by $x^{\underline{n}}=\prod_{j=0}^{n-1}(x-j)$ with $x^{\underline{0}}=1$.
Blasiak et al.~\cite{Blasiak03} introduced the {\it generalized Stirling numbers of the second kind},  denoted $\Stirling{n}{k}_{r,s}$ for $r\geq s\geq 0$, by
\begin{align}\label{s1eq1}
\mn\{[(b^{\dag})^rb^s]^n\}=(b^{\dag})^{n(r-s)}\sum_{k=s}^{ns}\Stirling{n}{k}_{r,s}(b^{\dag})^kb^k.
\end{align}
It follows from~\eqref{boson-Stirling} that $\Stirling{n}{k}_{1,1}=\Stirling{n}{k}$. An equivalent form of the numbers $\Stirling{n}{k}_{r,s}$ has already been studied by Carlitz~\cite{Carlitz32} (and in the particular case $s=1$ even earlier by Scherk~\cite{S1823}). The reader is referred to~\cite{El-Desouky10}
for a thorough survey of the numbers $\Stirling{n}{k}_{r,s}$. Blasiak et al.~\cite{Blasiak03} obtained the following results:
\begin{equation}\label{Blasiak-identity}
e^{-x}\sum_{k=s}^{\infty}\frac{1}{k!}\prod_{j=1}^n(k+(j-1)(r-s))^{\underline{s}}x^k=\sum_{k=s}^{ns}\Stirling{n}{k}_{r,s}x^{k},
\end{equation}
\begin{equation*}
\prod_{j=1}^n(x+(j-1)(r-s))^{\underline{s}}=\sum_{k=s}^{ns}\Stirling{n}{k}_{r,s}x^{\underline{k}}.
\end{equation*}
In particular,
$$(x^{\underline{r}})^n=\sum_{k=r}^{nr}\Stirling{n}{k}_{r,r}x^{\underline{k}}.$$
They also found that the numbers $\Stirling{n}{k}_{r,r}$ satisfy fo $n>1$ the recurrence relation
\begin{equation}\label{Blasiak-recurrence relation}
\Stirling{n+1}{k}_{r,r}=\sum_{p=0}^r\binom{k+p-r}{p}r^{\underline{p}}\Stirling{n}{k+p-r}_{r,r}
\quad\textrm{for $r\leq k\leq nr$}
\end{equation}
with the initial conditions
$\Stirling{1}{r}_{r,r}=1$ and $\Stirling{n}{k}_{r,r}=0$ for $~k<r$ or $nr<k\leq (n+1)r$.

We define a sequence of grammars $\{G_n\}_{n\geq1}$ by
\begin{equation}\label{G-Stirling}
G_n=\{x\rightarrow (n-1)x+xy, y\rightarrow y\}.
\end{equation}
Let $D_n$ be the linear operator associated to the grammar $G_n$. The following theorem shows that the numbers $\Stirling{n}{k}_{r,1}$ and $\Stirling{n}{k}_{r,r}$ can be
generated by the grammars $G_n$.
\begin{theorem}\label{thm2}
When $r\geq 2$, we have
\begin{equation}\label{S-1}
D_{(n-1)r-(n-2)}D_{(n-2)r-(n-3)} \cdots D_{3r-2}D_{2r-1}D_rD_1(x)=x\sum_{k=1}^n\Stirling{n}{k}_{r,1}y^k,
\end{equation}
\begin{equation}\label{S-2}
 D_1(D_rD_{r-1}\cdots D_1)^{n-1}(x)=x\sum_{k=r}^{nr}\Stirling{n}{k}_{r,r}y^{k-r+1}.
\end{equation}
\end{theorem}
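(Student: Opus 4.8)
The plan is to deduce both \eqref{S-1} and \eqref{S-2} from the explicit expansion \eqref{Blasiak-identity}, by tracking how each operator $D_m$ acts on polynomials of the shape $xf(y)$. The opening remark is that, by \eqref{G-Stirling} and the product rule, $D_m\bigl(xf(y)\bigr)=x\bigl((m-1+y)f(y)+yf'(y)\bigr)$, so this shape is preserved; writing $\mathcal L_m f:=(m-1+y)f+yf'$ we have $D_m(xf)=x\,\mathcal L_m f$, and the left‑hand sides of \eqref{S-1} and \eqref{S-2} become $x$ times a composite of operators $\mathcal L_m$ applied to the constant $1$. The step I expect to do most of the work is the conjugation identity
\[
\mathcal L_m=e^{-y}\circ\Bigl((m-1)+y\tfrac{d}{dy}\Bigr)\circ e^{y},
\]
which follows at once from $\bigl((m-1)+y\frac{d}{dy}\bigr)(e^{y}f)=e^{y}\bigl((m-1+y)f+yf'\bigr)$. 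Since the Euler operator $(m-1)+y\frac{d}{dy}$ is diagonal on monomials, with $y^j\mapsto(m-1+j)y^j$, after conjugating by $e^{y}$ (i.e. working with $e^{y}f$ in place of $f$) any composite $\mathcal L_{m_t}\cdots\mathcal L_{m_1}$ acts as the single diagonal operator $y^j\mapsto\bigl(\prod_i(m_i-1+j)\bigr)y^j$; in particular the order of the factors becomes immaterial.

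For \eqref{S-1}: reading the product from right to left, the operator applied at stage $\nu$ is $D_{(\nu-1)(r-1)+1}$, the first being $D_1$ with $D_1(x)=xy$. Writing the left‑hand side as $xf_n(y)$, so that $f_1=y$, and conjugating by $e^{y}$: since $e^{y}f_1=ye^{y}=\sum_{j\ge1}\frac{y^j}{(j-1)!}$ and the passage $f_{\nu-1}\mapsto f_\nu$ multiplies the coefficient of $y^j$ by $(\nu-1)(r-1)+j$ (for $\nu=2,\dots,n$), one obtains $e^{y}f_n=\sum_{j\ge1}\frac{\prod_{i=1}^{n-1}(j+i(r-1))}{(j-1)!}y^j=\sum_{j\ge1}\frac{\prod_{\mu=1}^{n}(j+(\mu-1)(r-1))}{j!}y^j$, using $j\prod_{i=1}^{n-1}(j+i(r-1))=\prod_{\mu=1}^{n}(j+(\mu-1)(r-1))$. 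By the $s=1$ instance of \eqref{Blasiak-identity} (where $(\,\cdot\,)^{\underline 1}=(\,\cdot\,)$) this last series equals $e^{y}\sum_k\Stirling{n}{k}_{r,1}y^k$, whence $f_n=\sum_k\Stirling{n}{k}_{r,1}y^k$, i.e. \eqref{S-1}.

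For \eqref{S-2}: set $T=D_rD_{r-1}\cdots D_1$, so the left‑hand side is $B_n:=D_1T^{n-1}(x)$, with $B_1=D_1(x)=xy$. The step I expect to be the real bookkeeping hurdle is the recursion in $n$: since $T^{n}(x)=T\bigl(T^{n-1}(x)\bigr)$ and the last operator in the block $T$ is $D_1$, one gets $B_{n+1}=D_1D_rD_{r-1}\cdots D_2(B_n)$. Writing $B_n=xf_n(y)$ and conjugating by $e^{y}$, the passage $e^{y}f_n\mapsto e^{y}f_{n+1}$ is the diagonal operator with eigenvalue $\prod_{m=1}^{r}(m-1+j)=j(j+1)\cdots(j+r-1)$ on $y^j$ (the index multiset $\{1,r,r-1,\dots,2\}$ being just $\{1,\dots,r\}$), so that $e^{y}f_n=\sum_{j\ge1}\frac{\bigl(j(j+1)\cdots(j+r-1)\bigr)^{n-1}}{(j-1)!}y^j$. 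I would then multiply $f_n=e^{-y}(e^{y}f_n)$ by $y^{r-1}$, substitute $k=j+r-1$, and use the two identities $(k-r+1)(k-r+2)\cdots k=k^{\underline r}$ and $k!=k^{\underline r}\,(k-r)!$ (the terms with $k<r$ vanishing automatically) to turn the coefficient into $(k^{\underline r})^{n}/k!$; by the $s=r$ instance of \eqref{Blasiak-identity} (where $(k+(\mu-1)\cdot 0)^{\underline r}=k^{\underline r}$) this gives $y^{r-1}f_n=\sum_k\Stirling{n}{k}_{r,r}y^k$, and dividing by $y^{r-1}$ yields \eqref{S-2}.

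A remark on alternatives and on what can go wrong. One could instead argue by three‑term recurrences: for \eqref{S-1}, $h(n,k):=[y^k]f_n$ satisfies $h(n,k)=\bigl(k+(n-1)(r-1)\bigr)h(n-1,k)+h(n-1,k-1)$ with $h(1,1)=1$, and the same recurrence and initial condition are read off from \eqref{Blasiak-identity} at $s=1$, exactly paralleling the proof of Theorem~\ref{pStirling}. For \eqref{S-2}, however, the matching recurrence for $\Stirling{n}{k}_{r,r}$ is the heavier one \eqref{Blasiak-recurrence relation}, with its sum over $p$ and its falling‑factorial weights, and verifying directly that the coefficients of $B_n$ satisfy it is cumbersome — which is exactly the computation the conjugation trick sidesteps, by collapsing the noncommutative block $D_1D_r\cdots D_2$ to a single diagonal operator. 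Accordingly, the two hazards to guard against are: (i) getting the order of the $D_m$ inside $T$ right, so that the step operator in \eqref{S-2} really is $D_1D_rD_{r-1}\cdots D_2$; and (ii) carrying the shift $k=j+r-1$ through consistently, including the harmless re‑extension of the summation range to $k\ge0$.
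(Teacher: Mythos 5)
Your proof is correct, but it takes a genuinely different route from the paper's. The paper proves \eqref{S-1} exactly by the ``alternative'' you mention at the end: it writes the left-hand side as $x\sum_k g(n,k)y^k$, computes the action of the next operator $D_{nr-(n-1)}$ to obtain $g(n+1,k)=(k+nr-n)g(n,k)+g(n,k-1)$, and matches this with the recurrence for $\Stirling{n}{k}_{r,1}$ extracted from \eqref{Blasiak-identity}; for \eqref{S-2} it records only the reduction $D_1(D_rD_{r-1}\cdots D_1)^{n}(x)=D_1D_r\cdots D_2\{D_1(D_rD_{r-1}\cdots D_1)^{n-1}(x)\}$ (the same reduction you use to get $B_{n+1}=D_1D_r\cdots D_2(B_n)$) and then asserts that the rest is a ``straightforward, albeit tedious'' application of \eqref{Blasiak-recurrence relation}, without carrying it out. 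You instead note $D_m\bigl(xf(y)\bigr)=x\bigl((m-1+y)f+yf'\bigr)=x\,\mathcal{L}_m f$ and conjugate by $e^{y}$, so that each $\mathcal{L}_m$ becomes the diagonal operator $y^j\mapsto(m-1+j)y^j$; composites then commute and their eigenvalues multiply, and both \eqref{S-1} and (after multiplying by $y^{r-1}$ and shifting $k=j+r-1$) \eqref{S-2} are read off directly from the explicit expansion \eqref{Blasiak-identity} at $s=1$ and $s=r$. I checked the bookkeeping: the stage-$\nu$ operator in \eqref{S-1} is indeed $D_{(\nu-1)(r-1)+1}$, the resulting coefficient $\prod_{\mu=1}^{n}(j+(\mu-1)(r-1))/j!$ matches the $s=1$ case, and in \eqref{S-2} the index multiset $\{1,2,\ldots,r\}$ gives eigenvalue $j(j+1)\cdots(j+r-1)$, leading after the shift to $(k^{\underline{r}})^{n}/k!$, which is the $s=r$ case since $r-s=0$. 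What your approach buys is a complete and uniform proof of \eqref{S-2}, precisely the part the paper leaves as a tedious verification against the heavier recurrence \eqref{Blasiak-recurrence relation}, at the cost of invoking the full identity \eqref{Blasiak-identity} rather than only the three-term recurrence it implies. The only point worth stating explicitly is that all manipulations (conjugation by $e^{y}$, division by $y^{r-1}$) take place in the ring of formal power series in $y$, where they are legitimate; with that remark added, the argument is complete.
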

\begin{proof}
It follows from~\eqref{Blasiak-identity} that
$$\Stirling{n}{k}_{r,1}=[k+(n-1)(r-1)]\Stirling{n-1}{k}_{r,1}+\Stirling{n-1}{k-1}_{r,1},$$
with initial conditions $\Stirling{1}{k}_{r,1}=\delta_{k,1}$.
Note that $D_1(x)=xy$ and $D_rD_1(x)=x(ry+y^2)$. For $n\geq 1$, we define $g(n,k)$ by
$$D_{(n-1)r-(n-2)}D_{(n-2)r-(n-3)} \cdots D_{3r-2}D_{2r-1}D_rD_1(x)=x\sum_{k=1}^ng(n,k)y^k.$$
Using
$$D_{nr-(n-1)}\left\{x\sum_{k=1}^ng(n,k)y^k\right\}=[(nr-n)x+xy]\sum_{k=1}^ng(n,k)y^k+x\sum_{k=1}^nkg(n,k)y^k,$$
we obtain
$$g(n+1,k)=(k+nr-n)g(n,k)+g(n,k-1).$$
Hence, the numbers $g(n,k)$ satisfy the same recurrence relation
and initial conditions as $\Stirling{n}{k}_{r,1}$, so they agree. Using
$$ D_1(D_rD_{r-1}\cdots D_1)^{n}(x)=D_1D_rD_{r-1}\cdots D_2\left\{D_1(D_rD_{r-1}\cdots D_1)^{n-1}(x)\right\},$$
one can show along the same lines the corresponding assertion~\eqref{S-2}. This is a straightforward, albeit tedious, application of~\eqref{Blasiak-recurrence relation}.
\end{proof}

From \eqref{S-2} we read off that the number of terms in $D_1(D_rD_{r-1}\cdots D_1)^{n-1}(x)$ is given by $B_r(n)=\sum_{k=r}^{nr}\Stirling{n}{k}_{r,r}$, the $n$-th generalized Bell number.

\begin{example}
When $r=3$, we have the following relations:
\begin{align*}
D_1(x)& =xy, \\
D_3D_1(x)& =x(3y+y^2), \\
D_5D_3D_1(x)& =x(15y+9y^2+y^3),\\
D_1(D_3D_2D_1)(x)&=x(6y+18y^2+9y^3+y^4).
\end{align*}
The last line shows that $B_3(2)=34$, see A069223 in~\cite{Sloane}.
\end{example}

Recall that a board $B$ is a subset of cells of an $n\times n$ chessboard. The {\it rook number} $r_k(B)$ is defined as the number of ways to put $k$ non-attacking rooks on the board $B$, and the corresponding generating function is called the {\it rook polynomial}. We define a {\it Ferrers board} to be a board with column heights given by $0\leq h_1\leq h_2\leq\cdots\leq h_n$, and denote it by $F(h_1,h_2,\ldots,h_n)$.
Let $G_n$ be as in~\eqref{G-Stirling}. For $n\geq 1$, we define $a_{n}(y)$ and $b_{n}(y)$ by
\begin{align*}
(D_2D_1)^n(x)&=xa_{n}(y),\\
D_1(D_2D_1)^{n-1}(x)&=xb_{n}(y).
\end{align*}
Note that the array of coefficients of the polynomials $a_{n}(y)$ is A088960 in~\cite{Sloane}. Moreover, the coefficients of the polynomials $b_{n}(y)$ are given by $\Stirling{n}{k}_{2,2}$ (see~\cite[A078739]{Sloane}). It is easy to verify that $a_{n}(y)$ is the rook polynomial of the Ferrers board
$F(1,1,3,3,\ldots,2n-3,2n-3,2n-1,2n-1)$, and $b_{n}(x)$ is the rook polynomial of the Ferrers board $F(1,1,3,3,\ldots,2n-3,2n-3,2n-1,2n-1,2n)$.

\section{On the context-free grammar $G_n=\{x\rightarrow q^n x+xy, y\rightarrow y\}$}\label{sec-5:generalized Stirling numbers}
The {\it $q$-analog of the Stirling numbers of the second kind} $\Stirling{n}{k}_q$ is defined by
$$(x+1)(x+q)\cdots (x+q^{n-1})=\sum_{k=1}^n\Stirling{n}{k}_q(x+1)^{\underline{k}}.$$

Using weighted partitions, Cigler~\cite{Cigler92} presented a combinatorial interpretation of the numbers $\Stirling{n}{k}_q$ and showed that they satisfy the recurrence relation
\begin{equation}\label{Snkq-recu}
\Stirling{n+1}{k}_q=(k-1+q^n)\Stirling{n}{k}_q+\Stirling{n}{k-1}_q
\end{equation}
with the initial conditions $\Stirling{1}{k}_q=\delta_{1,k}$. This recurrence relation gives rise to the following result.

\begin{theorem}
Let $G_n=\{x\rightarrow q^nx+xy, y\rightarrow y\}$, and let $D_n$ be the linear operator associated to the grammar $G_n$. Then we have, for $n\geq 2$, that
$$D_{n-1}\cdots D_3D_2D_1(x)=x\sum_{k=1}^{n}\Stirling{n}{k}_qy^{k-1}.$$
\end{theorem}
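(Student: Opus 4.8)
The plan is to mimic the proof of Theorem~\ref{pStirling} almost verbatim, replacing the constant $p$ in the first production rule by the step-dependent weight $q^{j}$ coming from the $j$-th grammar applied. First I would set up the auxiliary coefficients: define $g(n,k)$ by
\begin{equation*}
D_{n-1}\cdots D_2D_1(x)=x\sum_{k=1}^{n}g(n,k)y^{k-1},
\end{equation*}
together with the base case, which one checks directly: $D_1(x)=q^1x\cdot\text{(nothing)}$—wait, more carefully, $D_1(x)=q x + xy$, hence $D_1(x)=x(q+y)$, giving $g(2,1)=q$ and $g(2,2)=1$. (If one prefers to start the induction one step earlier one can also declare $g(1,k)=\delta_{1,k}$, consistent with the empty composition of operators acting on $x$.)

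Next I would run the inductive step. Assuming $D_{n-1}\cdots D_1(x)=x\sum_{k\ge 1}g(n,k)y^{k-1}$, apply $D_n$, using that under the grammar $G_n=\{x\to q^n x+xy,\ y\to y\}$ one has $D_n(x)=x(q^n+y)$ and $D_n(y)=y$, so $D_n(xy^{k-1})=x(q^n+y)y^{k-1}+(k-1)xy^{k-1}=x\bigl[(k-1+q^n)y^{k-1}+y^{k}\bigr]$. Summing over $k$ and reading off the coefficient of $y^{k-1}$ in $D_n(D_{n-1}\cdots D_1(x))$ yields
\begin{equation*}
g(n+1,k)=(k-1+q^{n})g(n,k)+g(n,k-1).
\end{equation*}
This is exactly the recurrence~\eqref{Snkq-recu} for $\Stirling{n}{k}_q$ (shifted so that $g(n,k)$ plays the role of $\Stirling{n}{k}_q$, since $\Stirling{n+1}{k}_q=(k-1+q^n)\Stirling{n}{k}_q+\Stirling{n}{k-1}_q$). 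Matching the initial conditions—$g(2,1)=q=\Stirling{2}{1}_q$, $g(2,2)=1=\Stirling{2}{2}_q$, or equivalently $g(1,k)=\delta_{1,k}=\Stirling{1}{k}_q$—and invoking uniqueness of the solution of a first-order-in-$n$ recurrence with prescribed initial row, we conclude $g(n,k)=\Stirling{n}{k}_q$ for all $n\ge 2$ and all $k$, which is the assertion.

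There is essentially no obstacle here: the only thing to be careful about is bookkeeping with the index shift (the recurrence~\eqref{Snkq-recu} is written with $q^n$ on the left indexed by $n$, while our operator chain $D_{n-1}\cdots D_1$ has $n-1$ factors, so the weight appearing when we pass from the $n$-term polynomial to the $(n+1)$-term polynomial is $q^{n}$, exactly as needed) and confirming that the boundary coefficients $g(n,1)$ and $g(n,n)$ produced by the recurrence agree with $\Stirling{n}{1}_q$ and $\Stirling{n}{n}_q$, which follows automatically once the first nontrivial row and the recurrence coincide. A one-line remark that $q=1$ recovers Chen's Stirling grammar~\eqref{Chen-grammar} and~\eqref{Chen-grams} would be a natural sanity check to include.
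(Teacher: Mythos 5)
Your proposal is correct and takes essentially the same route as the paper: define the coefficients of $D_{n-1}\cdots D_1(x)$, apply $D_n$ to derive the recurrence $g(n+1,k)=(k-1+q^n)g(n,k)+g(n,k-1)$, and identify it with~\eqref{Snkq-recu} together with the initial conditions. The paper's proof is exactly this induction-on-the-recurrence argument, so there is nothing substantive to change (only the stray ``wait'' aside should be cleaned up).
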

\begin{proof}
Note that $D_1(x)=x(q+y)$ and $D_2D_1(x)=x[q^3+(q^2+q+1)y+y^2]$.
We define $i(n,k)$ by
$$D_{n-1}\cdots D_3D_2D_1(x)=x\sum_{k=1}^{n}i(n,k)y^{k-1}.$$
Note that $$D_n(D_{n-1}\cdots D_3D_2D_1(x))=x\sum_k(k-1+q^n)i(n,k)y^{k-1}+x\sum_ki(n,k)y^k.$$
Therefore, we conclude that
$$i(n+1,k)=(k-1+q^n)i(n,k)+i(n,k-1),$$
and complete the proof by comparing this with~\eqref{Snkq-recu}.
\end{proof}

\section{On the context-free grammars $G=\{x\rightarrow rx+xy, y\rightarrow my\}$}\label{sec:Whitney numbers}
Let $mx+r$ be an arithmetric progression.
The {\it $r$-Whitney numbers
of the second kind} $W_{m,r}(n,k)$ are defined by
$$(mx+r)^n=\sum_{k=0}^nm^kW_{m,r}(n,k)x^{\underline{k}}$$
(see~\cite{Mezo10}). The exponential generating functions of the numbers $W_{m,r}(n,k)$ are given by
$$\sum_{n\geq k}W_{m,r}(n,k)\frac{z^n}{n!}=\frac{e^{rz}}{k!}\left(\frac{e^{mz}-1}{m}\right)^k.$$
The numbers $W_{m,r}(n,k)$ are a common generalization of the numbers $\Stirling{n}{k}_r$, the $r$-{\it Stirling numbers of the second kind} (see~\cite{Broder84}) and $W_m(n,k)$, the {\it Whitney numbers of the second kind} (see~\cite{Benoumhani97}).
More precisely, we have
\begin{equation*}
\begin{split}
W_{1,0}(n,k)&=\Stirling{n}{k},\\
W_{1,r}(n,k)&=\Stirling{n+r}{k+r}_r,\\
W_{m,1}(n,k)&=W_m(n,k).
\end{split}
\end{equation*}

The {\it $r$-Dowling polynomial} of degree $n$ is defined by
$$D_{m,r}(n;x)=\sum_{k=0}^nW_{m,r}(n,k)x^k.$$
It is well known that the numbers $W_{m,r}(n,k)$ satisfy the recurrence relation
$$W_{m,r}(n,k)=(r+km)W_{m,r}(n-1,k)+W_{m,r}(n-1,k-1)$$
with the initial conditions $W_{m,r}(1,0)=r$, $W_{m,r}(1,1)=1$ and  $W_{m,r}(1,k)=0$ for $k\geq2$. This is equivalent to
\begin{equation*}
D_{m,r}(n;x)=(r+x)D_{m,r}(n-1;x)+mxD_{m,r}'(n-1;x)
\end{equation*}
with $D_{m,r}(1;x)=r+x$ (see~\cite{Cheon12}). In the next theorem, a grammatical interpretation of the $r$-Dowling polynomials is given.
\begin{theorem}\label{Dowling}
If $G=\{x\rightarrow rx+xy, y\rightarrow my\}$, then
\begin{equation*}
D^n(x)=xD_{m,r}(n;y).
\end{equation*}
\end{theorem}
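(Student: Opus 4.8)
The plan is to proceed by induction on $n$, mirroring the proofs of Theorem~\ref{pStirling} and the $q$-analogue above. Write $D^n(x)=x\sum_{k\geq 0} w(n,k)y^k$; such an expansion exists because the rules $x\to rx+xy$ and $y\to my$ preserve the single factor of $x$ and introduce no new variables, so inductively $D^n(x)=x f_n(y)$ for a polynomial $f_n$ in $y$. The base case is immediate: $D(x)=rx+xy=x(r+y)$, so $w(1,0)=r$, $w(1,1)=1$, and $w(1,k)=0$ for $k\geq 2$, which matches the stated initial conditions for $W_{m,r}(1,k)$.

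For the inductive step I would apply the formal derivative to $x\sum_k w(n,k)y^k$. Using the product rule together with $D(x)=x(r+y)$ and the chain rule $D(y^k)=ky^{k-1}D(y)=mk\,y^k$, one obtains
\begin{align*}
D^{n+1}(x)&=x(r+y)\sum_k w(n,k)y^k+x\sum_k mk\,w(n,k)y^k\\
&=x\sum_k\bigl[(r+mk)w(n,k)+w(n,k-1)\bigr]y^k.
\end{align*}
Hence $w(n+1,k)=(r+mk)w(n,k)+w(n,k-1)$, which is exactly the recurrence satisfied by $W_{m,r}(n,k)$. Since the two families obey the same recurrence and the same initial data, $w(n,k)=W_{m,r}(n,k)$ for all $n,k$, and therefore $D^n(x)=x\sum_k W_{m,r}(n,k)y^k=xD_{m,r}(n;y)$.

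Alternatively—and perhaps more transparently—one can argue directly at the polynomial level: set $P_n(y):=D^n(x)/x$, observe $P_1(y)=r+y=D_{m,r}(1;y)$, and note that the computation above reads $P_{n+1}(y)=(r+y)P_n(y)+my\,P_n'(y)$, which is precisely the recurrence $D_{m,r}(n+1;y)=(r+y)D_{m,r}(n;y)+my\,D_{m,r}'(n;y)$ recorded before the statement. Either way there is no genuine obstacle; the only point that requires a moment's care is the bookkeeping in applying the formal derivative with respect to $G$—namely that $D$ acts on $y^k$ via the chain rule with $D(y)=my$, producing the factor $mk$ rather than $k$, which is exactly what upgrades the ordinary Dowling/Stirling recurrence to the $m$-weighted one and accounts for the parameter $m$ in $D_{m,r}(n;y)$.
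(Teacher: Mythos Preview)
Your proof is correct and is essentially the paper's own argument: the paper defines $f_n(y)$ by $D^n(x)=xf_n(y)$, checks $f_1(y)=r+y=D_{m,r}(1;y)$, and computes $D^{n+1}(x)=x(r+y)f_n(y)+mxyf_n'(y)$ to obtain the same recurrence $f_{n+1}(y)=(r+y)f_n(y)+myf_n'(y)$ satisfied by $D_{m,r}(n;y)$. Your coefficient-level derivation of $w(n+1,k)=(r+mk)w(n,k)+w(n,k-1)$ is just the expanded form of this, and your ``alternative'' polynomial-level argument is verbatim the paper's proof.
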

\begin{proof}
Note that $D(x)=x(r+y)$ and $D^2(x)=x[r^2+(m+2r)y+y^2]$. For $n\geq 1$, we define $D^n(x)=xf_n(y)$.
Then $f_1(y)=D_{m,r}(1;y)$. Moreover, since
$$D^{n+1}(x)=D(xf_n(y))=x(r+y)f_n(y)+mxyf_n'(y),$$
it follows that $f_{n+1}(y)=(r+y)f_n(y)+myf_n'(y)$.
Hence, $f_n(y)$ satisfy the same recurrence relation
and initial conditions as $D_{m,r}(n;y)$, so they agree.
\end{proof}

The grammar of Theorem~\ref{Dowling} coincides with the one of Theorem~\ref{pStirling} for $r=p$ and $m=1$, so there should be a close connection between the $S_p(n,k)$ and $W_{m,r}(n,k)$. In fact, writing in Theorem~\ref{Dowling} for $m=1$ and $r=p$ the result as $D^n(x)=xD_{1,p}(n;y)=x\sum_{k=0}^nW_{1,p}(n,k)y^k$ and comparing this to Theorem~\ref{pStirling}, one finds the relation
$$ W_{1,p}(n,k)=S_p(n+1,k+1). $$

It follows from~\eqref{Dnab-Leib} that $$D^{n+1}(x)=D^n(x+xy)=D^n(x)+\sum_{k=0}^n\binom{n}{k}D^k(x)D^{n-k}(y).$$
Clearly, we have $D^k(y)=m^ky$ for $k\geq 0$. Thus, we immediately obtain the following corollary.
\begin{corollary}[{\cite[Thm.~5.1]{Cheon12}}] The $r$-Dowling polynomials satisfy the recurrence relation
$$D_{m,r}(n+1;x)=rD_{m,r}(n;x)+x\sum_{k=0}^{n}\binom{n}{k}m^{n-k}D_{m,r}(k;x).$$
\end{corollary}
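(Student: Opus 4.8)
The plan is to apply the formal derivative one more time to $D^{n}(x)$, but to split the production $x\to rx+xy$ into its two summands and treat the product $xy$ with Leibniz's formula. By Theorem~\ref{Dowling} we already have $D^{n}(x)=xD_{m,r}(n;y)$ for every $n\geq 1$, and this extends to $n=0$ since $D^{0}(x)=x$ and $D_{m,r}(0;y)=1$; so it is enough to expand $D^{n+1}(x)$ and identify the resulting polynomial in $y$.

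First I would write, using linearity of $D$,
\[
D^{n+1}(x)=D^{n}\bigl(D(x)\bigr)=D^{n}(rx+xy)=rD^{n}(x)+D^{n}(xy).
\]
Next I would expand the last term by Leibniz's formula~\eqref{Dnab-Leib}, namely $D^{n}(xy)=\sum_{k=0}^{n}\binom{n}{k}D^{k}(x)D^{n-k}(y)$. A one-line induction, using $D(y)=my$ and $D(m^{j}y)=m^{j+1}y$, gives $D^{j}(y)=m^{j}y$ for all $j\geq 0$. Substituting this together with Theorem~\ref{Dowling} into the sum yields
\[
D^{n}(xy)=\sum_{k=0}^{n}\binom{n}{k}m^{n-k}\bigl(xD_{m,r}(k;y)\bigr)y=xy\sum_{k=0}^{n}\binom{n}{k}m^{n-k}D_{m,r}(k;y).
\]

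Finally I would assemble the two pieces, again using Theorem~\ref{Dowling} on the left-hand side: $xD_{m,r}(n+1;y)=rxD_{m,r}(n;y)+xy\sum_{k=0}^{n}\binom{n}{k}m^{n-k}D_{m,r}(k;y)$. Cancelling the common factor $x$ and renaming the indeterminate $y$ as $x$ gives the claimed recurrence. I do not expect a genuine obstacle here: the statement is a direct consequence of Theorem~\ref{Dowling}, the Leibniz rule~\eqref{Dnab-Leib}, and the elementary identity $D^{j}(y)=m^{j}y$. The only points needing a moment of care are checking the $n=0$ instance of Theorem~\ref{Dowling} (so that the term $k=0$ in the sum is legitimate) and the harmless relabeling of variables at the end.
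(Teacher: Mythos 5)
Your proposal is correct and follows essentially the same route as the paper: expand $D^{n+1}(x)=D^{n}(rx+xy)$, apply Leibniz's formula \eqref{Dnab-Leib} to $D^{n}(xy)$, use $D^{k}(y)=m^{k}y$, and identify the pieces via Theorem~\ref{Dowling}. Your write-up is in fact slightly more careful than the paper's (you check the $n=0$ case and spell out the cancellation of $x$, and your displayed decomposition correctly keeps the factor $r$, which the paper's displayed equation drops in an apparent typo), but there is no substantive difference in approach.
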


\section{On the Stirling-Frobenius subset numbers}\label{Section-7}
Let $\sgn$ denote the sign function defined on $\mathbb{R}$, i.e.,
\begin{equation*}
\sgn x=\begin{cases}
+1 & \text{if $x>0$},\\ 0  & \text{if
$x=0$},\\ -1&\text{if
$x<0$.}
\end{cases}
\end{equation*}
Let
$$\Eulerian{n}{k}_m=\frac{1}{2}\sum_{j=0}^{n+1}(-1)^j\binom{n+1}{j}(m(k-j)+1)^n\sgn(m(k-j)+1),$$
with the special value $\Eulerian{0}{0}_1=1$. Luschny~\cite{Luschny} introduced the {\it Stirling-Frobenius subset numbers}, denoted $\Stirling{n}{k}_m$, by
\begin{equation*}\label{StirlingFrobenius}
\Stirling{n}{k}_m=\frac{1}{m^kk!}\sum_{j}\Eulerian{n}{j}_m\binom{j}{n-k},
\end{equation*}
compare this to~\eqref{StirlingEulerian}. Define $$\overline{\Stirling{n}{k}}_m=m^k\Stirling{n}{k}_m,$$
          $$\widetilde{\Stirling{n}{k}}_m=m^kk!\Stirling{n}{k}_m.$$
Luschny~\cite{Luschny} obtained the following recurrence relations:
\begin{equation*}
\begin{split}
\Stirling{n}{k}_m&=(m(k+1)-1)\Stirling{n-1}{k}_m+\Stirling{n-1}{k-1}_m,\\
\overline{\Stirling{n}{k}}_m&=(m(k+1)-1)\overline{\Stirling{n-1}{k}}_m+m\overline{\Stirling{n-1}{k-1}}_m,\\
\widetilde{\Stirling{n}{k}}_m&=(m(k+1)-1)\widetilde{\Stirling{n-1}{k}}_m+mk\widetilde{\Stirling{n-1}{k-1}}_m,\\
\end{split}
\end{equation*}
with the initial conditions $\Stirling{n}{0}_m=\overline{\Stirling{n}{0}}_m=\widetilde{\Stirling{n}{0}}_m=\delta_{n,0}$. In the following theorem, a grammatical characterization of the numbers $\Stirling{n}{k}_m,\overline{\Stirling{n}{k}}_m$ and $\widetilde{\Stirling{n}{k}}_m$  is given.
\begin{theorem}
Let $m\in\mathbb{N}$.
\begin{enumerate}
\item [(a)]
If $G=\{x\rightarrow (m-1)x+xy, y\rightarrow my\}$, then
\begin{equation}\label{FS11}
D^n(x)=x\sum_{k=0}^n\Stirling{n}{k}_my^k.
\end{equation}
\item [(b)]
If $G=\{x\rightarrow (m-1)x+mxy, y\rightarrow my\}$, then
\begin{equation}\label{FS12}
D^n(x)=x\sum_{k=0}^n\overline{\Stirling{n}{k}}_my^k.
\end{equation}
\item [(c)]
If $G=\{x\rightarrow (m-1)x+mxy, y\rightarrow m(y+y^2)\}$, then
\begin{equation}\label{FS13}
D^n(x)=x\sum_{k=0}^n\widetilde{\Stirling{n}{k}}_my^k.
\end{equation}
\end{enumerate}
\end{theorem}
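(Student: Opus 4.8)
The plan is to establish all three identities by the single induction on $n$ already used for Theorems~\ref{pStirling}, \ref{thm2} and~\ref{Dowling}: write $D^n(x)=xP_n(y)$ for a polynomial $P_n$, derive a first-order recurrence for $P_n$ straight from the grammar, pass to the coefficients, and match the result against the recurrence relations displayed above.

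First I would record the common structural step. In each of the three grammars one has $D(x)=x\,u(y)$ and $D(y)=v(y)$ for fixed polynomials $u,v\in\mathbb{Z}[y]$: namely $u(y)=(m-1)+y$ in~(a) and $u(y)=(m-1)+my$ in~(b) and~(c), while $v(y)=my$ in~(a) and~(b) and $v(y)=my+my^2$ in~(c). By the product rule and the chain rule, $D\bigl(x\,g(y)\bigr)=x\,u(y)g(y)+x\,v(y)g'(y)$, which is again $x$ times a polynomial in $y$; since $D^0(x)=x$, induction yields $D^n(x)=xP_n(y)$ with $P_0(y)=1$ and
\[
P_{n+1}(y)=u(y)P_n(y)+v(y)P_n'(y).
\]
It then remains only to identify the coefficients $c(n,k):=[y^k]P_n(y)$.

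Next I would carry out part~(a) in full. Extracting the coefficient of $y^k$ from $P_{n+1}=\bigl((m-1)+y\bigr)P_n+myP_n'$ gives
\[
c(n+1,k)=(m-1)c(n,k)+c(n,k-1)+mk\,c(n,k)=\bigl(m(k+1)-1\bigr)c(n,k)+c(n,k-1),
\]
with $c(n,-1)=0$ and $c(0,k)=\delta_{0,k}$; this is exactly the recurrence and the base row for $\Stirling{n}{k}_m$, so $c(n,k)=\Stirling{n}{k}_m$ and~\eqref{FS11} follows. Parts~(b) and~(c) are the same computation with two local changes. For~(b) the larger $u(y)=(m-1)+my$ contributes an extra factor $m$ in front of $c(n,k-1)$, producing $c(n+1,k)=\bigl(m(k+1)-1\bigr)c(n,k)+m\,c(n,k-1)$, the recurrence for $\overline{\Stirling{n}{k}}_m$. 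For~(c) one moreover has the extra term $my^2P_n'(y)$ coming from $v$, whose coefficient of $y^k$ is $m(k-1)c(n,k-1)$; adding the $m\,c(n,k-1)$ coming from $u$ gives the combined term $mk\,c(n,k-1)$, i.e. the recurrence for $\widetilde{\Stirling{n}{k}}_m$. Since $P_0=1$ matches the base row in every case, the induction closes and~\eqref{FS12},~\eqref{FS13} follow.

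I expect no genuine obstacle here; the difficulty, such as it is, lies entirely in getting the weights exactly right. In particular, in case~(c) one must check that the index shift produced by $y\mapsto y+y^2$ yields the factor $mk$ — not $m$ or $m(k+1)$ — multiplying $c(n,k-1)$, and one should keep track of the boundary column $k=0$: the recurrences above, anchored by $P_0=1$, force $c(n,0)=(m-1)^n$ in all three cases, matching Luschny's triangles and collapsing to $\delta_{n,0}$ precisely for $m=1$, in which case~(a) is the Stirling grammar~\eqref{Chen-grammar}.
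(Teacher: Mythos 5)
Your proof is correct and follows essentially the same route as the paper's: write $D^n(x)=xP_n(y)$, read off the coefficient recurrence $c(n+1,k)=(m(k+1)-1)c(n,k)+\{1,m,mk\}\,c(n,k-1)$ from the grammar, and identify it with Luschny's recurrences and initial values. The only differences are cosmetic: you carry out parts (b) and (c) explicitly (the paper proves only (a) and declares the others similar) and you make the boundary column $c(n,0)=(m-1)^n$ explicit, which is a harmless clarification.
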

\begin{proof}
We only prove the assertion~\eqref{FS11}, the assertions~\eqref{FS12} and ~\eqref{FS13} can be proved in a similar way.
Note that $D(x)=x(m-1+y)$ and $D^2(x)=x[(m-1)^2+(3m-2)y+y^2]$.
For $n\geq 1$, we define $j(n,k)$ by
$$D^n(x)=x\sum_{k=0}^nj(n,k)y^k.$$
Since
$$D(D^n(x))=x\sum_{k}(m(k+1)-1)j(n,k)y^k+x\sum_{k}j(n,k)y^{k+1}$$
it follows that
$$j(n+1,k)=(m(k+1)-1)j(n,k)+j(n,k-1).$$
Hence, the numbers $j(n,k)$ satisfy the same recurrence relation
and initial conditions as $\Stirling{n}{k}_m$, so they agree.
\end{proof}
\section{Concluding remarks}\label{Section-5}
In this paper, we considered several extensions of the Stirling grammar~\eqref{Chen-grammar}. In fact, there are many other extensions of the Stirling grammar. Here we present two further examples.
\begin{example}
If $G=\{x\rightarrow xy+xy^2, y\rightarrow y^2\}$, then
\begin{equation*}
D^n(x)=xy^n\sum_{k=0}^nk!{\binom{n}{k}}^2y^{n-k}.
\end{equation*}
\end{example}
\begin{example}
If $G=\{x\rightarrow xy+xy^2, y\rightarrow y^3\}$, then
\begin{equation*}
D^n(x)=xy^n\theta_n(y),
\end{equation*}
where $\theta_n(x)=\sum_{k=0}^n\frac{(n+k)!}{(n-k)!k!}\left(\frac{x}{2}\right)^k$ is
the Bessel polynomial (see~\cite[A001498]{Sloane}).
\end{example}

Recall that for the conventional derivative $D=\frac{d}{dx}$ one may consider the {\it shift operator} $e^{\lambda D}$, which acts on an analytical function $f$ according to Taylor's theorem by $(e^{\lambda D}f)(x)=f(x+\lambda)$, in particular, $e^{\lambda D}(x)=x+\lambda$. Let us consider a context-free grammar $G$ and the associated formal derivative $D$. Define $e^{\lambda D}$ by the usual series, that is, $e^{\lambda D}=\sum_{n\geq 0} \frac{\lambda^n}{n!}D^n$. It would be interesting to find explicit expressions for $e^{\lambda D}(\omega)$ for some simple words $\omega$, for example $\omega=x$ or $\omega=xy$.
\begin{example}
Let $G=\{x\rightarrow xy, y\rightarrow y\}$ be the Stirling grammar \eqref{Chen-grammar} and $D$ the associated formal derivative. Since $D^n(y)=y$ for all $n=0,1,2,\ldots$, one finds $e^{\lambda D}(y)=\sum_{n\geq 0} \frac{\lambda^n}{n!}D^n(y)=e^{\lambda}y$. More generally, a simple induction shows that $D^n(y^m)=m^ny^m$, implying $e^{\lambda D}(y^m)=e^{\lambda m}y^m$. Using \eqref{Chen-grams}, one obtains
\begin{equation}\label{Shift}
e^{\lambda D}(x)=\sum_{n\geq 0} \frac{\lambda^n}{n!}D^n(x)=x \sum_{n\geq 0}\sum_{k\geq 0} \Stirling{n}{k} y^k\frac{\lambda^n}{n!}=xe^{(e^{\lambda}-1)y},
\end{equation}
which was already noted by Chen~\cite[Eq. (4.9)]{Chen93} from a different point of view. Slightly more involved is the evaluation of $e^{\lambda D}(xy)$. Recalling $D(x)=xy$, we find $e^{\lambda D}(xy)=e^{\lambda D}(D(x))=D(e^{\lambda D}(x))=D(xe^{(e^{\lambda}-1)y})$, where we have used \eqref{Shift}. Applying the formal product and chain rule, this gives $e^{\lambda D}(xy)=D(x)e^{(e^{\lambda}-1)y}+xD(e^{(e^{\lambda}-1)y})=xye^{(e^{\lambda}-1)y}+x(e^{\lambda}-1)e^{(e^{\lambda}-1)y}y$. Thus, we have shown that
\begin{equation}\label{Shift2}
e^{\lambda D}(xy)=xye^{\lambda}e^{(e^{\lambda}-1)y}.
\end{equation}
The determination of $e^{\lambda D}(\omega)$, where the word $\omega$ contains more than one $x$ (for example, $\omega=x^2$), seems to be more difficult.
\end{example}
\noindent However, it seems that for most grammars even the determination of $e^{\lambda D}(x)$ is a challenging task.

We end our paper by proposing the following open problem.
\begin{problem}
Let $G=\{x\rightarrow f(x,y),y\rightarrow g(x,y)\}$. Find a normal ordering problem that is equivalent to the context-free grammar $G$. For example, we showed that $G=\{x\rightarrow x+xy,y\rightarrow y\}$ is equivalent to the normal ordering problem where $xy-yx=1$.
\end{problem}


\end{document}